\newtheorem{theorem}{Theorem}[section]
\newtheorem{proposition}[theorem]{Proposition}
\newtheorem{lemma}[theorem]{Lemma}
\newtheorem{example}{Example}
\theoremstyle{remark}
\newtheorem{remark}{Remark}
\newcommand{\diffsym}{\mathbin{\triangle}}
\newcommand{\R}{\mathbb{R}}
\newcommand{\T}{\mathbb{T}}
\newcommand{\ie}{\text{i.e.}}
\newcommand{\eg}{\text{e.g.}}
\newcommand{\Pcal}{\mathcal{P}}
\newcommand{\Qcal}{\mathcal{Q}}
\newcommand{\tplus}{\oplus}
\newcommand{\tsum}{\bigoplus}
\newcommand{\tdot}{\odot}
\newcommand{\trans}[1]{{#1}^\top}
\newcommand{\trop}{\mathrm{trop}}
\newcommand{\tro}[1]{\widehat{#1}}
\DeclareMathOperator{\diag}{\mathrm{diag}}
\newcommand{\cla}[1]{#1}
\newcommand{\norm}[1]{\overline{\cla #1}}
\newcommand{\nashA}{P}
\newcommand{\nashB}{Q}
\title{Tropical complementarity problems\\
and Nash equilibria}
\author{Xavier Allamigeon \and Stéphane Gaubert \and Frédéric Meunier}
\address[Xavier Allamigeon]{INRIA \& CMAP, CNRS, {\'E}cole Polytechnique, Institut Polytechnique de Paris \and CERMICS, \'Ecole des Ponts ParisTech, France.}
\email{xavier.allamigeon@inria.fr}
\address[Stéphane Gaubert]{INRIA \& CMAP, CNRS, {\'E}cole Polytechnique, Institut Polytechnique de Paris, France.}
\email{stephane.gaubert@inria.fr}
\address[Frédéric Meunier]{CERMICS, \'Ecole des Ponts ParisTech, France.}
\email{frederic.meunier@enpc.fr}
\thanks{The first and last authors were partially supported by ANR CAPPS (ANR-17-CE40-0018).}
\begin{document}

\begin{abstract}
Linear complementarity programming is a generalization of linear programming which encompasses the computation of Nash equilibria for bimatrix games. While the latter problem is PPAD-complete, 
we show that the tropical analogue of the complementarity problem associated with Nash equilibria can be solved in polynomial time.  
Moreover, we prove that the Lemke--Howson algorithm carries over the tropical setting and performs a linear number of pivots in the worst case. A consequence of this result is a new class of (classical) bimatrix games for which Nash equilibria computation can be done in polynomial time.
\end{abstract}

\keywords{Nash equilibria, bimatrix games, linear complementarity problems, Lemke--Howson algorithm, tropical geometry}

\subjclass[2020]{90C24,	90C33}

\maketitle

\section{Introduction}

Given a square matrix $\cla M$ and a vector $\cla q$, the {\em linear complementarity problem} (LCP) consists in finding a solution $(w,z)$ of the following system
\begin{equation}\label{LCP}\tag{LCP}
\left\{\begin{array}{l}
w = \cla M z + \cla q \\
\trans{w} z = 0 \\
w, z \geq 0 \, .
\end{array}\right.
\end{equation}
It is a central problem in mathematical programming, which is related to the computation of equilibria in economics and convex quadratic minimization, and which encompasses linear programming as a special case. In the full generality given here, deciding whether the system admits a solution is an NP-complete problem \cite{chung1989}. 

Consider the special case where the following two conditions are satisfied simultaneously:
\begin{enumerate}[label=(\roman*)]
\item\label{i} each column of $\cla M$ is nonpositive and has at least one negative entry;
\item\label{ii} each entry of $\cla q$ is positive.
\end{enumerate}
Condition~\ref{ii} implies that the system~\eqref{LCP} admits $(\cla q,0)$ as trivial solution. A well-known theorem ensures that there always exists another solution; see~\cite{ComplementarityBook}. Finding such a solution contains the computation of Nash equilibria in bimatrix games (the principle of the reduction is recalled in Section~\ref{sec:tropical_nash}). We call thus the {\em Nash equilibrium complementarity problem} the problem of finding a solution to
\begin{equation}\label{NECP}\tag{NECP}
\left\{\begin{array}{l}
w = \cla M z + \cla q \\
\trans{w} z=0 \\
z \neq 0 \\
w, z \geq 0 \, ,
\end{array}\right.
\end{equation}
when $\cla M$ and $\cla q$ satisfy the conditions~\ref{i} and~\ref{ii} above. Finding such a solution different from the trivial one is a PPAD-complete problem \cite{chen2009settling}. 

Tropical analogues of mathematical programming problems and related topics have recently been the subject of several works. These cover linear programming and generalizations~\cite{AllamigeonSIDMA15,GaubertJSC,loho2020abstract,loho2019tropical}, semidefinite programming~\cite{allamigeon2018solving,yu_semidefinite_cone}, integer linear programming~\cite{ButkovicMacCaig2014}, and convex constraint satisfaction~\cite{BodirskyMamino2018}. The objective of this work is to introduce the tropical analogues of the problems~\eqref{LCP} and~\eqref{NECP} and to check whether their complexity status is identical in the tropical world. As far as we know, this is the first work addressing tropical complementarity problems. Here, we use the \emph{max-plus tropical semifield} $\T \coloneqq \R \cup \{-\infty\}$, equipped with the addition $x \tplus y \coloneqq \max(x,y)$ and the multiplication $x \tdot y \coloneqq x + y$. (We use the convention that $x + (-\infty) = (-\infty) + x  = -\infty$ for all $x \in \T$.) The zero and unit elements are respectively $-\infty$ and $0$. The tropical operations extend to matrices (and vectors): the tropical sum of two matrices is the tropical componentwise sum, while $(M \tdot N)_{ij} \coloneqq \tsum_k M_{ik} \tdot N_{kj}$. We shall also denote by $-\infty$ the all-$-\infty$ vector and the all-$-\infty$ matrix. The following problem, which we call the {\em tropical linear complementarity problem}, is the natural analogue of the linear complementarity problem in the tropical semifield: given two square matrices $M^-,M^+$ and two vectors $q^-,q^+$ with entries in $\T$ and such that $\min(M_{ij}^+,M_{ij}^-)=\min(q^+_i,q^-_i)=-\infty$ for all $i,j$, find $(w,z)$ with entries in $\T$ such that
\begin{equation}\label{TLCP}\tag{TLCP}
\left\{\begin{array}{l}
w \tplus M^- \tdot z \tplus q^-=M^+ \tdot z \tplus q^+ \\
\trans{w} \tdot z=-\infty \,.
\end{array}\right.
\end{equation}
Let us comment on why~\eqref{TLCP} is the tropical analogue of~\eqref{LCP}. The quantities $M_{ij}^+$ and $M_{ij}^-$ (resp.~$q_i^+$ and $q_i^-$) can be interpreted as the tropical positive and negative parts of a same quantity; see~\cite[Section~2.1.1]{AllamigeonSIDMA15} for further explanations. This justifies the assumption $\min(M_{ij}^+,M_{ij}^-) =\min(q^+_i,q^-_i)=-\infty$, and explains why the constraint $w \tplus M^- \tdot z \tplus q^- = M^+ \tdot z \tplus q^+$ is the tropical counterpart of the constraint $w = Mz + q$ in~\eqref{LCP}. The tropical analogues of the nonnegativity conditions over $w$ and $z$ in~\eqref{LCP} write $w, z \geq -\infty$ in the tropical setting; hence they are implicitly satisfied and they can be omitted. 
We show that, in the general case, 
the complexity status of the linear complementarity problem remains the same when we carry it into the tropical setting.
\begin{proposition}\label{prop:tlcp}
Deciding whether the tropical linear complementarity problem~\eqref{TLCP} has a solution is \textup{NP}-complete, even if all entries of $M^-$ and $q^+$ are equal to $-\infty$.
\end{proposition}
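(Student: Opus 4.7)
The plan has two parts: membership in NP, by a guess-and-verify argument, and NP-hardness, by a reduction from $3$-SAT respecting the restriction $M^- = -\infty$ and $q^+ = -\infty$.

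For membership in NP, I would rely on the observation that, once the combinatorial pattern recording which coordinates of $(w,z)$ equal $-\infty$ is fixed, the remaining constraints of~\eqref{TLCP} form a two-sided tropical linear system in the finite unknowns. Feasibility of such a system admits polynomial-size witnesses, via its well-known equivalence to mean-payoff games. Guessing both the support pattern and such a witness therefore provides a polynomial certificate.

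For NP-hardness, given a $3$-SAT instance with variables $x_1, \dots, x_n$ and clauses $C_1, \dots, C_m$, I would build a TLCP of dimension $2n + m$ whose coordinates are $a_j, b_j$ for $j \in \{1, \dots, n\}$ (one for each literal of $x_j$) and $c_k$ for $k \in \{1, \dots, m\}$ (one for each clause); I set $M^- = -\infty$ and $q^+ = -\infty$ throughout. For the variable gadget, the rows of $M^+$ at $a_j$ and $b_j$ are $-\infty$ except for $M^+_{a_j, b_j} = M^+_{b_j, a_j} = 0$, and $q^-_{a_j} = q^-_{b_j} = -\infty$, so that the two corresponding equations become $w_{a_j} = z_{b_j}$ and $w_{b_j} = z_{a_j}$; combined with $w_{a_j} \tdot z_{a_j} = w_{b_j} \tdot z_{b_j} = -\infty$ they force at most one of $z_{a_j}, z_{b_j}$ to be finite. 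For the clause gadget, row $c_k$ of $M^+$ is $-\infty$ except for $M^+_{c_k, a_j} = 0$ whenever $x_j \in C_k$ and $M^+_{c_k, b_j} = 0$ whenever $\bar{x}_j \in C_k$, and $q^-_{c_k} = 0$, so the equation becomes $\max(w_{c_k}, 0) = \max_{\ell \in C_k} z_\ell$ (identifying each literal with its coordinate); since the left-hand side is $\geq 0$, some $z_\ell$ with $\ell \in C_k$ must be $\geq 0$, in particular finite, and the complementarity at $c_k$ is vacuously met by setting $z_{c_k} = -\infty$. The equivalence is then direct: from a satisfying assignment $x^*$, setting $z_{a_j} = 0$ (resp.~$z_{b_j} = 0$) if $x_j^*$ is true (resp.~false), all remaining $z$-coordinates to $-\infty$, and $w$ as prescribed by the equations, gives a TLCP solution; conversely, from any TLCP solution, setting $x_j$ to true iff $z_{a_j}$ is finite yields a well-defined assignment (by the variable gadget), and each clause is then satisfied by the literal provided by the clause gadget.

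The main obstacle is to simulate boolean logic using only the two mechanisms available: tropical addition, which behaves like logical \emph{or} and powers the clause gadget; and the complementarity $w_i \tdot z_i = -\infty$ which, when paired with an equation of the form $w_i = z_j$, acts like a \emph{nand} between $z_i$ and $z_j$ and powers the variable gadget. Once this correspondence is in hand, the remaining verifications are routine; the only care to take is to place all finite constants on rows of $M^+$ and on $q^-$, so that the restriction $M^- = q^+ = -\infty$ is respected.
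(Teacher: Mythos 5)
Your proposal is correct and follows essentially the same route as the paper: a reduction from SAT in which a tropical sum equated with $w_i \tplus 0$ encodes clause satisfaction, while the complementarity constraints paired with equations of the form $w_i = z_j$ encode the exclusion between a literal and its negation. The only substantive differences are that you make the NP-membership argument explicit (the paper leaves it implicit) and that, by taking ``finite'' rather than ``equal to $0$'' as the truth criterion, you dispense with the paper's two normalization rows $w_0 \tplus 0 = \bigoplus_{i=1}^{2n} z_i$ and $w_{2n+p+1} \tplus 0 = z_0$ (which force the literal variables to be at most $0$) --- a harmless simplification, since your backward direction never needs that upper bound.
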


The case where all entries of $M^+$ are equal to $-\infty$ is trivial. Indeed, if $q^-$ is not the $-\infty$ vector, then there is no solution. Otherwise, the system always has a solution provided by $w = q^+$ and $z$ as the $-\infty$ vector. Finding another solution precisely corresponds to the tropical analogue of the Nash equilibrium complementarity problem, which we call the {\em tropical Nash equilibrium complementarity problem}: 
\begin{equation}\label{TNECP}\tag{TNECP}
\left\{\begin{array}{l}
w \tplus M^- \tdot z= q^+ \\
\trans{w} \tdot z=-\infty \\
z \neq -\infty \, ,
\end{array}\right.
\end{equation}
with the following assumptions:
\begin{enumerate}[label=(\roman*$^{\text{trop}}$)]
\item\label{it} no column of $M^-$ is the $-\infty$ vector; 
\item\label{iit} $q^+$ has no $-\infty$ entry.
\end{enumerate}

Perhaps surprisingly, the complexity status of this problem differs from its classical analogue.
\begin{theorem}\label{thm:tnecp}
The tropical Nash equilibrium complementarity problem~\eqref{TNECP} always admits a solution and such a solution can be computed in polynomial time.
\end{theorem}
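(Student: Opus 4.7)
The plan is to give a direct constructive proof combining tropical residuation with a fixed-point argument on a saturation digraph; this is simpler than a tropicalized Lemke--Howson approach and already yields existence together with polynomial-time computability.

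First, I would introduce the residual (principal) solution $\hat{z}$ defined by $\hat{z}_j \coloneqq \min_i (q^+_i - M^-_{ij})$, using the convention $q^+_i - (-\infty) = +\infty$. Assumption~\ref{it} ensures each column of $M^-$ has at least one finite entry, and assumption~\ref{iit} gives $q^+_i \in \R$ for each $i$, so the minimum is taken over a non-empty set of reals and $\hat{z}_j \in \R$ for every $j$; standard residuation then yields $M^- \tdot \hat{z} \leq q^+$ componentwise. I would next encode the tightness pattern as a digraph $D$ on $[n]$ with an edge $j \to i$ precisely when $M^-_{ij} + \hat{z}_j = q^+_i$. Because $\hat{z}_j$ is attained as a minimum, every vertex of $D$ has out-degree at least one, and a standard pigeonhole argument (iterating any successor function on a finite set) produces a directed cycle in $D$.

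The heart of the argument is the reduction to a combinatorial fixed point. Call $S \subseteq [n]$ \emph{stable} if it is non-empty and every $j \in S$ has an in-neighbor in $S$ within $D$. Given a stable $S$, I set $z_j \coloneqq \hat{z}_j$ for $j \in S$ and $z_j \coloneqq -\infty$ otherwise, and I set $w_i \coloneqq -\infty$ whenever row $i$ has an in-neighbor in $S$ (saturation), and $w_i \coloneqq q^+_i$ otherwise. A direct verification shows that $(w,z)$ solves \eqref{TNECP}: the equation $w \tplus M^- \tdot z = q^+$ holds because $M^- \tdot z \leq q^+$ and $w_i$ is raised to $q^+_i$ on exactly those rows where the inequality is strict; $z \neq -\infty$ follows from $S \neq \emptyset$; and complementarity $w_i \tdot z_i = -\infty$ holds because stability forces $w_j = -\infty$ for every $j \in S$.

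To produce a stable set algorithmically, I take $S^*$ to be the largest stable subset, obtained by iteratively removing from $[n]$ any vertex whose in-neighborhood in the current set is empty; this peeling terminates in $O(n^2)$ time and returns the largest stable set because stability is preserved under unions. Since $D$ contains a directed cycle and the vertex set of any cycle is itself stable, $S^*$ is non-empty. Combining this peeling with the construction of $(w,z)$ from $S^*$ yields a polynomial-time algorithm producing a solution of \eqref{TNECP}. I expect the main technical point to be the clean formulation of the equivalence between stability of $S$ and the conjunction of the equation and complementarity constraints of \eqref{TNECP}; once this equivalence is pinned down, everything else reduces to routine tropical bookkeeping and a standard finite-graph argument.
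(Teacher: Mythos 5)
Your proposal is correct. It is also, at its core, the same argument as the paper's: your digraph $D$ (with an arc $j \to i$ whenever $i$ attains $\min_k (q^+_k - M^-_{kj})$) is exactly the paper's colored bipartite graph $G$ with each blue edge $u_i v_i$ contracted, your out-degree-at-least-one observation is their remark that every column node meets a red edge, and your directed cycle is their alternating blue/red cycle $C$; the solution you build from the vertex set of a cycle coincides with their $\alpha(F_0 \diffsym C)$. The packaging differs in two minor but genuine ways. First, your ``stable set'' notion is slightly more flexible: any non-empty $S$ in which every vertex has an in-neighbor yields a solution directly, with no need to first prune red edges so that each column node has exactly one (the paper's reduction step), and the maximal stable set $S^*$ gives a canonical output. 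Second, what the matching formulation buys the paper — and what your contraction discards — is the identification of solutions with perfect matchings of $G$, which they reuse to count solutions ($2^\kappa - 1$ via collections of disjoint cycles, Theorem~\ref{thm:count}), to connect with tropical bases, and to interpret the Lemke--Howson pivots as edge exchanges. So your route is a clean, self-contained existence-plus-algorithm proof of Theorem~\ref{thm:tnecp}, but if you wanted the later structural results you would end up re-expanding $D$ back into the bipartite graph.
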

The proof relies on finding in a bipartite graph a perfect matching distinct from a given one.

The standard approach to solve the classical Nash equilibrium complementarity problem~\eqref{NECP} is the Lemke--Howson algorithm~\cite{lemke1964equilibrium}. We show that this algorithm with the suitable notion of tropical bases solves~\eqref{TNECP} as well. Like in the classical case, this requires that the instance be nondegenerate. (The nondegeneracy of an instance is defined in Section~\ref{sec:tnecp}.) A major difference is that the number of iterations is linear in the tropical case, while it can be exponential in the classical case~\cite{SavaniStengel06}:
\begin{theorem}\label{th:complexity}
The Lemke--Howson algorithm solves nondegenerate instances of~\eqref{TNECP} within at most $2n-1$ iterations, where $n$ is the number of rows of the system. 	
\end{theorem}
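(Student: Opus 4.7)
The plan is to analyze the tropical Lemke--Howson pivoting combinatorially, via the bipartite graph interpretation of complementary solutions already used to establish \Cref{thm:tnecp}. I would view each state of the algorithm as an almost-perfect matching in a bipartite graph $G$ whose two sides are indexed by the $n$ rows of the system and by the $n$ variables $z_1,\dots,z_n$. The initial trivial solution corresponds to one perfect matching $M_0$, and the final state corresponds to a different perfect matching encoding a non-trivial solution of \eqref{TNECP}, whose existence is guaranteed by \Cref{thm:tnecp}.

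First I would reformulate a tropical Lemke--Howson pivot as an elementary operation on matchings. The initial ``label-drop'' (unfreezing a chosen complementary pair) removes one edge from $M_0$, producing an almost-perfect matching in which exactly one row-node $r$ and one column-node $c$ are unsaturated. I would then argue that each subsequent pivot is uniquely determined by nondegeneracy and consists of: selecting the unique tight edge incident to the current free vertex on one side (the tropical minimum-ratio test reducing to a unique winner), adding it to the matching, and removing the edge that previously saturated the newly saturated vertex. This frees a new vertex on the opposite side, exactly as in a matching-augmentation step. I would check that the tropical complementarity and tightness conditions reduce cleanly to this edge-swap description, and that nondegeneracy forbids ties in the choice of incoming edge.

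Next I would interpret the sequence of intermediate matchings globally: the edges added and removed across all pivots trace out a single walk $P$ in $G$ starting at the initially freed row-node $r$, alternating between non-matching and matching edges. The algorithm terminates precisely when $P$ reaches a vertex that can be matched so as to restore a perfect matching (equivalently, when $P$ closes into a full complementary labeling). Because $G$ is bipartite and the pivots never revisit a vertex (which I would prove by a potential argument, or by showing that revisiting would contradict nondegeneracy together with the uniqueness of the pivot direction), $P$ is a simple alternating path.

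The final step is the length bound. A simple path in a bipartite graph with $n$ vertices on each side visits at most $2n$ distinct vertices and therefore contains at most $2n-1$ edges; since each pivot contributes exactly one new edge to $P$, the algorithm performs at most $2n-1$ pivots. The main obstacle is the second step, namely verifying rigorously that the tropical pivot rule, when unpacked from the semifield arithmetic, really is an alternating-edge swap on $G$, and that nondegeneracy forces a single deterministic continuation; everything else (the path structure and the bound) is then a short combinatorial argument.
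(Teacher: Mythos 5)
Your reduction of a tropical pivot to an edge swap at a row node, with uniqueness coming from nondegeneracy, is sound and matches the paper's own graph-theoretic reading of the algorithm (modulo a small modelling fix: the bases live in $[n]\uplus[n]$, so besides the red edges encoding the $z_j$ you need the blue edges $u_iv_i$ encoding the $w_i$, as in the graph $G$ of Section~\ref{subsec:tncep}). The genuine gap is the step you yourself flag: the claim that ``the pivots never revisit a vertex'', so that the walk is a simple alternating path. This claim is false. In the nondegenerate case each column node carries exactly one red edge, so the walk started at $v_{j^\star}$ is obtained by iterating the map that sends a column node to the row node of its unique red edge and a row node $u_i$ to $v_i$; this is a functional graph, and the trajectory of $j^\star$ in general has a tail before it enters its cycle. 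The first repeated vertex is then a row node different from the start, and the Lemke--Howson algorithm does not stop there: it \emph{unwinds} the tail, i.e.\ the columns that left the basis earlier re-enter in reverse order, so edges of your walk get removed and later re-added. What is true is that no \emph{basis} is visited twice (that is Proposition~\ref{prop:term-LH}), but that only gives termination, not a polynomial bound.

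The paper's proof is built exactly around this phenomenon. It records the pivot rows $i_1,\dots,i_{p-1}$, takes the first repetition $i_l=i_k$ with $k<l$, and proves by induction that the leaving column at iteration $l+s$ is the entering column of iteration $k-s$; hence the algorithm stops at iteration $l+k$, giving $p=l+k\leq 2n-1$ (the two cases are illustrated in Figure~\ref{fig:trace-bis}, where part (b) is precisely the ``lasso'' with a revisited row node). A sanity check that your simple-path picture cannot be right: if the walk were simple and closed into a cycle through $v_{j^\star}$, the iteration count would be bounded by roughly $n+1$, not $2n-1$; the extra $n$ in the stated bound is exactly the cost of retracing the tail. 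So the missing ingredient is not a rigorization of the edge-swap description but the unwinding argument after the first repeated pivot row, which is the heart of the theorem.
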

As noted in Remark~\ref{remark:nondegenerate} (end of Section~\ref{subsec:trop_LH}), the Lemke--Howson algorithm can actually handle any instance of~\eqref{TNECP} by applying a simple symbolic perturbation. 

One interest of the study of~\eqref{TNECP} is to bring new classes of the Nash equilibrium complementarity problem that can be solved within the same complexity. More precisely, it is usual to map a classical problem to a tropical one by taking the \emph{logarithmic image}: the numerical inputs of the latter are defined as the (signed) logarithm of the numerical inputs of the former. In particular, the logarithmic image of an instance of~\eqref{NECP} is the instance of~\eqref{TNECP} in which $M^- = \log (- \cla M)$ and $q^+ = \log \cla q$. We identify a ``dominance condition'' under which the solutions of both problems are in one-to-one correspondence through their supports. The support of a vector is the index set of its nonzero entries in the classical setting, and of its non-$(-\infty)$ entries in the tropical setting.
The dominance condition, stated in Section~\ref{subsec:realization}, is a diagonal dominance property which holds for a collection of submatrices. It is reminiscent of the notion of lopsidedness in tropical geometry~\cite{purbhoo2008}. It can be decided in polynomial time.
\begin{theorem}\label{th:support}
Consider an instance of~\eqref{NECP} that satisfies the ``dominance condition'' and the corresponding instance of~\eqref{TNECP} obtained by the logarithmic image.
Then the solutions of the two problems have the same supports. 
\end{theorem}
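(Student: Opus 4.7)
The plan is to proceed support pair by support pair. Given supports $I$ and $J$ for $w$ and $z$ respectively (in either setting), complementarity forces $I\cap J=\emptyset$, and nondegeneracy yields $|I|+|J|=n$. Fixing an admissible pair $(I,J)$, I would set $z_{J^c}$ and $w_{I^c}$ to the zero of the underlying semifield ($0$ classically, $-\infty$ tropically) and rewrite the system as a square equation determining $z_J$ from $\cla q_{I^c}$ via the submatrix indexed by $I^c\times J$, together with an explicit formula for $w_I$ which must then be checked to lie in the positive (resp.\ non-$(-\infty)$) cone. Concretely, in the classical case this means solving $(-\cla M)_{I^c,J}\,z_J=\cla q_{I^c}$ and requiring $z_J>0$ and $w_I:=\cla q_I+\cla M_{I,J}\,z_J>0$; in the tropical case, this means solving $\max_{j\in J}(M^-_{i,j}+z_j)=q^+_i$ for each $i\in I^c$ and then checking that the value $w_i:=q^+_i$ satisfies $\max_{j\in J}(M^-_{i,j}+z_j)\le q^+_i$ for $i\in I$.

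Once both reductions are on the table, I would invoke the dominance hypothesis on the square submatrix $(-\cla M)_{I^c,J}$. Diagonal dominance in the lopsided sense means that, up to a permutation $\sigma\colon I^c\to J$, the entry $(-\cla M)_{i,\sigma(i)}$ is strictly larger than the sum of the other entries in row $i$, and correspondingly $M^-_{i,\sigma(i)}$ tropically dominates the other $M^-_{i,j}$. Under this condition I would argue in parallel that (i) the classical system $(-\cla M)_{I^c,J}\,z_J=\cla q_{I^c}$ admits a positive solution iff the tropical system $\max_{j\in J}(M^-_{i,j}+z_j)=q^+_i$ admits a solution, and in that case the unique classical solution satisfies $\log z_{\sigma(i)}=q^+_i-M^-_{i,\sigma(i)}+o(1)$, in agreement with the tropical value prescribed by the dominant term; and (ii) the positivity check $w_I>0$, equivalent to $\cla q_i>(-\cla M)_{i,J}\,z_J$, translates under dominance into the tropical inequality $q^+_i>\max_{j\in J}(M^-_{i,j}+z_j)$, because the dominant term controls both the sign and the magnitude of the classical expression. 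The two admissibility tests on the pair $(I,J)$ therefore coincide, yielding the desired support-by-support correspondence.

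The main obstacle is making this transfer rigorous and quantitative, so that neither the signs of the $z_j$ nor the direction of the inequalities for $w_I$ can be flipped by subdominant contributions. To control this, I plan to invoke a tropical Cramer / lopsidedness-type lemma in the spirit of Purbhoo~\cite{purbhoo2008}: a lopsided real linear system has the same solvability status as its tropicalization, and its solution is the exponential of the tropical one up to a controlled perturbation that never changes the sign of any coordinate. Verifying that the dominance condition of the theorem is precisely what is needed to apply this lemma to every relevant submatrix indexed by $(I^c,J)$ with $I\cap J=\emptyset$ and $|I|+|J|=n$ then closes the argument and yields Theorem~\ref{th:support}.
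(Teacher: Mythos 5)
Your overall strategy (argue support by support, and use the dominance hypothesis to transfer solvability and positivity between the square classical system on $(-\cla M)_{I^c,J}$ and its tropicalization) is in the right spirit, and the ``easy'' direction is essentially the paper's Lemma~\ref{lemma:sub_sto}: once the normalized submatrix covers a permutation matrix with small off-diagonal row sums, writing it as $I+F$ with $\|F\|_\infty<1$ gives a unique, entrywise positive classical solution, matching the tropical basic solution's support. But there is a genuine gap in the converse direction, and it is exactly where the paper has to work hardest. The dominance condition does \emph{not} say that every submatrix indexed by a complementary pair $(I^c,J)$ is lopsided or diagonally dominant: condition~\ref{item:b} constrains only those square submatrices of the normalized matrix that already cover a permutation matrix, i.e.\ those for which the column maxima (the $1$-entries, which are the tropical argmins) can be matched injectively into the rows of $I^c$. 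For a candidate classical support whose submatrix fails this covering property, no lopsidedness-type lemma applies, so your argument neither rules out such a classical solution nor produces a tropical one with the same support. Closing this hole is precisely the content of Lemma~\ref{lemma:dom3}, which shows by induction on $n$, using an auxiliary linear program over the rows carrying the $1$-entries, that any classical \emph{feasible} basis forces its normalized submatrix to cover a permutation matrix; only after that does the dominance bound (and then the $I+F$ argument) kick in. Your final sentence, ``verifying that the dominance condition is precisely what is needed to apply this lemma to every relevant submatrix,'' is therefore not a verification but the missing theorem.

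Two smaller points. First, you invoke nondegeneracy to get $|I|+|J|=n$, but nondegeneracy is not a hypothesis of Theorem~\ref{th:support}; in the paper it is a \emph{consequence} of the dominance condition, established inside Proposition~\ref{prop:classical_basis}, so it must be proved, not assumed. Second, the statement ``$\log z_{\sigma(i)}=q^+_i-M^-_{i,\sigma(i)}+o(1)$'' has no meaning here: the logarithmic image is taken of a single fixed instance, not of a family with a large parameter, so there is no asymptotic regime; the correct (and sufficient) statement is the exact positivity of the classical basic solution, which is what Lemma~\ref{lemma:sub_sto} delivers.
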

The combination of Theorems~\ref{thm:tnecp} and~\ref{th:support} entails that, under the dominance condition, the Nash equilibrium complementarity problem can be solved in polynomial time. In fact, we refine this result by showing that, on such instances, the Lemke--Howson algorithm follows the same path when applied to the instance of~\eqref{NECP} and its logarithmic image; see Theorem~\ref{th:classical_LH}.

Applying the dominance condition to bimatrix games provides new families of instances for which Nash equilibria can be computed in polynomial time. An example is given by the following proposition. (We refer to Section~\ref{sec:tropical_nash} for a definition of a Nash equilibrium of a bimatrix game.)
\begin{proposition}\label{prop:spec-poly}
Consider a bimatrix game where the payoff matrices of the row and column players are $P \in \R_{\geq 0}^{r \times s}$ and $Q \in \R_{\geq 0}^{r \times s}$ respectively.

If every column of $\nashA$ (resp.~$\trans{\nashB}$) has an entry that is $r-1$ (resp.~$s-1$) times larger than any other entry in the column, the computation of a (classical) Nash equilibrium can be done in polynomial time.
\end{proposition}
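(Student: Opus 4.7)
The plan is to pipe the bimatrix game through the standard Lemke--Howson reduction into an instance of~\eqref{NECP}, take the logarithmic image to obtain an instance of~\eqref{TNECP}, verify the ``dominance condition'' on the resulting data, and finally invoke Theorems~\ref{thm:tnecp} and~\ref{th:support}.

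First I would set up the reduction. The bimatrix game with positive payoffs $P, Q \in \R_{\geq 0}^{r\times s}$ is classically encoded as an instance of~\eqref{NECP} with $(r+s)\times (r+s)$ matrix
\[
\cla M = -\begin{pmatrix} 0 & P \\ \trans{Q} & 0 \end{pmatrix}
\]
and $\cla q = \mathbf{1}$ (see the reduction recalled in Section~\ref{sec:tropical_nash}). One checks that conditions~\ref{i} and~\ref{ii} hold, so the classical Lemke--Howson framework applies. Passing to the logarithmic image, I obtain a corresponding instance of~\eqref{TNECP} with
\[
M^- = \begin{pmatrix} -\infty & \log P \\ \log \trans{Q} & -\infty \end{pmatrix}, \qquad q^+ = 0,
\]
which satisfies the tropical assumptions~\ref{it} and~\ref{iit}. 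At this stage, polynomial-time solvability of the logarithmic image follows from Theorem~\ref{thm:tnecp}, so it will remain only to transfer the answer back to the classical problem.

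Next I would verify the dominance condition of Section~\ref{subsec:realization} on $(M^-, q^+)$; this is the heart of the argument. The condition is a diagonal-dominance property for a family of submatrices that control the supports of solutions of~\eqref{TNECP}. Concretely, I would argue column by column: the hypothesis on $P$ says that in each column of $P$ there is an index $i^*$ such that $P_{i^*j} \geq (r-1) P_{ij}$ for every other row, hence $(\log P)_{i^*j} \geq (\log P)_{ij} + \log (r-1)$. Analogously for $\trans{Q}$ with gap $\log(s-1)$. For any submatrix of $M^-$ occurring in the dominance check, the column gaps of size $\log(r-1)$ or $\log(s-1)$ add up along any alternative permutation. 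Since the relevant submatrices have at most $r-1$ or $s-1$ rows of each block type, a routine bookkeeping (akin to Hadamard-type bounds for tropical permanents, and in the spirit of lopsidedness~\cite{purbhoo2008}) shows that the term attained by the ``column-wise dominant'' permutation strictly exceeds the sum of all other permutation terms. This yields the required tropical diagonal dominance.

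With the dominance condition in hand, Theorem~\ref{th:support} delivers a bijection between the supports of solutions to~\eqref{NECP} on $(\cla M, \cla q)$ and the supports of solutions to~\eqref{TNECP} on its logarithmic image. Theorem~\ref{thm:tnecp} then produces such a tropical solution, together with its support, in polynomial time. Given this support $S$, a classical Nash equilibrium with the same support is recovered by solving the square linear system extracted from $w = \cla M z + \cla q$ on the rows and columns of $S$ and renormalizing to get the mixed strategies, which is itself a polynomial-time computation. The main obstacle is step three, namely showing that the factor $r-1$ (resp.\ $s-1$) is sharp enough to make every alternative permutation lose out in the tropical determinantal comparison; the bookkeeping above is the right way to attack it but needs to be made precise once the exact form of the dominance condition in Section~\ref{subsec:realization} is at hand.
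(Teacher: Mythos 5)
Your skeleton is the same as the paper's: encode the game as \eqref{NECP} with $\cla M = -\begin{psmallmatrix} 0 & \nashA \\ \trans{\nashB} & 0 \end{psmallmatrix}$ and $\cla q$ the all-$1$ vector, pass to the logarithmic image, check the dominance condition, and conclude via Theorems~\ref{thm:tnecp} and~\ref{th:support} (your final step of solving the square linear system on the common support is a fine way to make the recovery explicit). The genuine gap is the step you yourself flag as the ``main obstacle'': the verification of the dominance condition is never carried out, and the sketch you give aims at the wrong target. The dominance condition of Section~\ref{subsec:realization} is a condition on the \emph{classical} system $\begin{pmatrix} I & -\cla M \end{pmatrix}\begin{psmallmatrix} w \\ z \end{psmallmatrix} = \cla q$ (not on the tropical data $(M^-,q^+)$, as you state): after columnwise normalization one requires \ref{item:a} the existence of an $(r+s)\times(r+s)$ submatrix covering a permutation matrix and \ref{item:b} that \emph{every} such submatrix has all row sums less than $2$. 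It is not a comparison between the term of a ``column-wise dominant'' permutation and the sum of all other permutation terms; that lopsidedness-style determinantal estimate is only what the condition is ``reminiscent of'', and bookkeeping in that direction does not produce the row-sum bound that is actually needed. Since verifying the condition from the hypothesis on $\nashA$ and $\trans{\nashB}$ is precisely the content of the proposition, deferring it leaves the proof incomplete.

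For comparison, here is what the verification amounts to. Normalizing gives $\norm A = \begin{pmatrix} I & \begin{smallmatrix} 0 & \nashA' \\ \trans{\nashB'} & 0 \end{smallmatrix} \end{pmatrix}$, where by hypothesis every column of $\nashA'$ (resp.\ $\trans{\nashB'}$) has exactly one entry equal to $1$ and all other nonzero entries less than $1/(r-1)$ (resp.\ $1/(s-1)$). Item~\ref{item:a} is witnessed by the identity block. For Item~\ref{item:b}, a submatrix $M'$ covering a permutation matrix has exactly one $1$-entry in each row (since each column of $\norm A$ has exactly one $1$-entry); moreover at most $r$ of its columns can come from the block $\begin{psmallmatrix} \nashA' \\ 0 \end{psmallmatrix}$, and at most $r-1$ of them when $M'$ also uses one of the first $r$ identity columns. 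Hence any row $i \leq r$ of $M'$ contains one $1$-entry plus at most $r-1$ further nonzero entries, each less than $1/(r-1)$, so its sum is below $1 + (r-1)\cdot\tfrac{1}{r-1} = 2$; the rows $r+1,\dots,r+s$ are handled symmetrically with $s-1$ and $1/(s-1)$. This counting argument, not a permutation-expansion estimate, is exactly what the factors $r-1$ and $s-1$ in the hypothesis are calibrated for, and it is the piece your proposal is missing.
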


We show in Section~\ref{sec:tropical_nash} that the solutions of~\eqref{TNECP} are in correspondence with tropical analogues of Nash equilibria in the case where the payoffs are nonnegative. We discuss the comparison with alternative definitions of Nash equilibria and the generalization to signed payoffs.

The paper is organized as follows. We prove Proposition~\ref{prop:tlcp} in Section~\ref{sec:tlcp}, and Theorem~\ref{thm:tnecp} in Section~\ref{sec:tnecp}. In Section~\ref{sec:trop_bases}, we introduce tropical bases, show how they relate with the feasible bases of classical systems of linear inequalities under the dominance condition, and prove Theorem~\ref{th:support}. In Section~\ref{sec:trop-LH}, we explain how the Lemke--Howson algorithm applies to instances of~\eqref{TNECP}, and show the related complexity results including Theorem~\ref{th:complexity}. Finally, in Section~\ref{sec:tropical_nash}, we discuss the tropicalization of Nash equilibria, and we prove \Cref{prop:spec-poly}.

\section{NP-completeness of tropical linear complementarity}\label{sec:tlcp}

Consider a propositional formula in the conjunctive normal form $\Phi=\bigwedge_{\gamma=1}^p C_{\gamma}$, with literals $x_1,\ldots,x_n$, and the following system over $\T$, whose variables are $w_0,\ldots,w_{2n+p+1},$ and $z_0,\ldots,z_{2n+p+1}$:
\begin{align}
w_0 \tplus 0 & = \displaystyle{\bigoplus_{i=1}^{2n} z_i }\label{sys1} \\
w_i & = z_{n+i} && \forall i\in[n] \label{sys2} \\
w_{n+i} & =  z_i && \forall i\in[n]\\
\displaystyle{w_{2n+\gamma} \tplus 0} & = \displaystyle{\bigoplus_{x_i \in C_{\gamma}} z_i \tplus \bigoplus_{\neg x_i \in C_{\gamma}} z_{n+i}} && \forall\gamma\in[p] \smallskip \label{sys4} \\
w_{2n+p+1} \tplus 0 & = z_0  \label{sys_z0} \\
w_i\tdot z_i &  = -\infty && \forall i\in\{0,\ldots,2n+p+1\} \label{sys5} \, ,
\end{align}
where we denote $[k] \coloneqq \{1, \dots, k\}$. This system is a special case of the tropical linear complementarity problem~\eqref{TLCP}, with all entries of $M^-$ and $q^+$ being equal to $-\infty$. In particular, variables $w_i$ (resp.~$z_i$) have to appear in the left-hand side (resp.~right-hand side) of the constraints. This explains the over-complicated form of the system.

\begin{lemma}\label{lem:eq}
The tropical system above admits a solution if and only if $\Phi$ is satisfiable.
\end{lemma}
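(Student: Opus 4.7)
My plan is to prove both implications separately. The key preliminary observation is that equation~\eqref{sys_z0} forces $z_0 \geq 0$, hence $z_0 \neq -\infty$, which by complementarity~\eqref{sys5} pins $w_0 = -\infty$; substituting back into~\eqref{sys1} yields $\bigoplus_{i=1}^{2n} z_i = 0$, so every $z_i$ with $i \in [2n]$ satisfies $z_i \leq 0$ and at least one of them equals $0$. This is what converts the otherwise loose tropical equations into a genuine encoding of a Boolean assignment.

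\emph{Forward direction.} Given a satisfying assignment $\sigma$ of $\Phi$, I would set $z_0 := 0$, $z_{2n+p+1} := -\infty$, $z_{2n+\gamma} := -\infty$ for $\gamma \in [p]$, and for each $i \in [n]$ let $(z_i, z_{n+i}) := (0, -\infty)$ if $\sigma(x_i)$ is true and $(-\infty, 0)$ otherwise. The $w_j$'s are then forced: $w_i := z_{n+i}$ and $w_{n+i} := z_i$ by~\eqref{sys2}--\eqref{sys3}, while $w_0 = w_{2n+\gamma} = w_{2n+p+1} := -\infty$. Equations~\eqref{sys1} and~\eqref{sys_z0} hold because $z_0 = 0$ and some $z_i$ equals $0$; \eqref{sys4} holds because each clause contains a true literal whose associated $z$-variable equals $0$; and every complementarity product $w_j \tdot z_j$ vanishes because in each pair $(z_i, z_{n+i})$ at least one component is $-\infty$.

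\emph{Backward direction.} Starting from an arbitrary solution and applying the preliminary observation, $w_0 = -\infty$ and $\max_{i \in [2n]} z_i = 0$. For each $i \in [n]$, equations~\eqref{sys2}--\eqref{sys3} combined with $w_i \tdot z_i = w_{n+i} \tdot z_{n+i} = -\infty$ yield $z_i \tdot z_{n+i} = -\infty$, so at least one of $z_i, z_{n+i}$ equals $-\infty$. Define $\sigma(x_i)$ to be true iff $z_i \neq -\infty$. For each clause $C_\gamma$, the left-hand side of~\eqref{sys4} is $w_{2n+\gamma} \tplus 0 \geq 0$, while every term of the tropical sum on the right-hand side is $\leq 0$; hence that sum equals exactly $0$, so some term attains $0$. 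Either $z_i = 0$ for some $x_i \in C_\gamma$, giving $\sigma(x_i) = \text{true}$ and satisfying the clause, or $z_{n+i} = 0$ for some $\neg x_i \in C_\gamma$, which forces $z_i = -\infty$, so $\sigma(x_i) = \text{false}$ and the literal $\neg x_i$ satisfies the clause.

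There is no genuine obstacle in the argument; it is essentially bookkeeping once one understands the role of each auxiliary variable. The only subtle design point is the gadget built from $z_0$, $w_0$ and equations~\eqref{sys1}--\eqref{sys_z0}: without it, the all-$(-\infty)$ assignment for the $z_i$'s would satisfy complementarity trivially but carry no Boolean information, so the encoding would collapse.
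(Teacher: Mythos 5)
Your proof is correct and takes essentially the same route as the paper's: the $z_0$ gadget forces $w_0=-\infty$, hence $\bigoplus_{i=1}^{2n} z_i = 0$ so that all these $z_i$ are at most $0$, the assignment is read off the $z_i$'s and clause satisfaction is checked through the clause equations, and the converse construction is identical to the paper's. The only cosmetic difference is that you declare $x_i$ true when $z_i \neq -\infty$ rather than when $z_i = 0$, which works equally well since the two definitions agree in the cases that matter for verifying each clause.
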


\begin{proof}
Suppose first that the tropical system admits a solution $w_0,\ldots,w_{2n+p+1},z_0,\ldots,z_{2n+p+1}$. We start by making an observation that will be useful for showing that $\Phi$ is satisfiable: $w_0 = -\infty$. This can be seen by combining~\eqref{sys5} and the fact that $z_0 \geq 0$ by~\eqref{sys_z0}. Now, set 
$x_i$ to \texttt{true} if and only if $z_i$ is equal to $0$. We check that we obtain a feasible assignment of the literals $x_i$. Consider a clause $C_{\gamma}$. If $z_i=0$ for at least one $x_i$ in $C_{\gamma}$, then $C_{\gamma}$ is satisfied. Thus, we assume that there is no such $z_i$.  We have then $\bigoplus_{x_i \in C_{\gamma}} z_i < 0$ since every $z_i$ ($i \in [n]$) is at most $0$ by~\eqref{sys1} and the equality $w_0 = -\infty$. By~\eqref{sys4}, there is an $\neg x_i$ in $C_{\gamma}$ with $z_{n+i}\geq 0$. Thanks to~\eqref{sys1}, we have actually $z_{n+i}=0$ (again, we use the equality $w_0 = -\infty$). The combination of~\eqref{sys2} and~\eqref{sys5} implies then that $z_i=-\infty$, which corresponds to $x_i$ set to \texttt{false}. In any case, the clause $C_{\gamma}$ is satisfied.

Conversely, suppose that there is a feasible assignment of the literals $x_i$. For every $i\in [n]$, 
\begin{itemize}
\item we set $z_i$ and $w_{n+i}$ to $0$ when $x_i$ is \texttt{true}, and to $-\infty$ otherwise;
\item we set $z_{n+i}$ and $w_i$ to $0$ when $x_i$ is \texttt{false}, and to $-\infty$ otherwise.
\end{itemize}
We set $w_{2n+\gamma}$ and $z_{2n+\gamma}$ to $-\infty$ for every $\gamma \in [p]$, as well as $w_{2n+p+1}$ and $z_{2n+p+1}$. Finally, we set $w_0$ to $-\infty$ and $z_0$ to $0$. Checking that we get a solution of the tropical system above is immediate.
\end{proof}

\begin{proof}[Proof of Proposition~\ref{prop:tlcp}]
The problem of deciding whether a propositional formula in the conjunctive normal form is satisfiable is NP-complete. The tropical system above being a special case of the tropical linear complementarity problem~\eqref{TLCP}, Lemma~\ref{lem:eq} implies that this latter problem is NP-complete as well.
\end{proof}

In the classical setting, there is a polynomial-time reduction from  linear complementarity problems to colorful linear programming~\cite[Section~3.2]{meunier2018colorful}. We point out that this reduction carries over to the tropical setting. Consider a linear system $A^+ \tdot x \tplus b^+ = A^- \tdot x \tplus b^-$ where $A^\pm \in \T^{n \times d}$, $b^\pm \in \T^n$, and $\min(A^+_{ij}, A^-_{ij}) = \min(b^+_i, b^-_i) = -\infty$ for all $i, j$. Let $C_1, \dots, C_k$  be a partition of $[d]$. The \emph{signed tropical colorful linear programming problem}, introduced by Loho and Sanyal~\cite{loho2019tropical}, consists in determining if there exists a solution $x$ of the system whose support contains at most one element of each $C_i$. The problem~\eqref{TLCP} can be transformed into such a problem in which each class $C_i$ consists of the indices of the variables $w_i$ and $z_i$. Therefore, Proposition~\ref{prop:tlcp} allows to recover~\cite[Corollary~4.6]{loho2019tropical} which states that signed tropical colorful linear programming is NP-complete.

\section{Solving the tropical Nash equilibrium complementarity problem in polynomial time}
\label{sec:tnecp}\label{subsec:tncep}

We consider an instance of~\eqref{TNECP}, and, for the sake of brevity, in this section we denote $M^-$ and $q^+$ by $M$ and $q$ respectively. We also denote by~$n$ the number of rows of $M$ and $q$.

We introduce the following colored bipartite (multi)graph $G = (V,E)$, which consists of \emph{row nodes} $u_i$ and \emph{column nodes} $v_j$ ($i, j \in [n]$), and the following set of edges:
\begin{itemize}
\item a blue edge $u_i v_i$ for all $i$;
\item a red edge $u_i v_j$ for all $i, j$ such that $q_i - M_{ij}$ is minimal among the $q_k - M_{kj}$, $k \in [n]$. 
\end{itemize}
Observe that, given $j \in [n]$, even if some $q_i - M_{ij}$ is equal to $+\infty$, the minimum of the terms $q_k - M_{kj}$ ($k \in [n]$) is well-defined and finite, because the conditions~\ref{it} and~\ref{iit} are satisfied. We also remark that every column node $v_j$ in $G$ is incident to at least one red edge, and its degree is at least $2$. 

Given a subset $F \subset E$ of edges $G$, we define the point $\alpha(F) \coloneqq (w,z) \in \T^n \times \T^n$, where
\begin{itemize}
\item $w_i = q_i$ if there is a blue edge between $u_i$ and $v_i$ in $F$, and $w_i = -\infty$ otherwise;
\item $z_j = q_i - M_{ij}$ if there is a red edge between $u_i$ and $v_j$ in $F$, and $z_j = -\infty$ otherwise.
\end{itemize}

\begin{lemma}\label{lemma:feasibility}
If $F \subset E$ covers all row nodes of $G$, then the point $(w,z) = \alpha(F)$ satisfies $w \tplus M \tdot z = q$.  
\end{lemma}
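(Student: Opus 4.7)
The plan is to prove the equality $w \tplus M \tdot z = q$ componentwise, establishing the two inequalities $\leq$ and $\geq$ separately. Fix an index $i \in [n]$; what has to be shown is that $\max\bigl(w_i, \max_{j}(M_{ij} + z_j)\bigr) = q_i$, where sums involving $-\infty$ are interpreted in the usual tropical way.

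For the inequality $\leq$, I would argue term by term. By construction $w_i$ is either $q_i$ or $-\infty$, so $w_i \leq q_i$. For each $j$, if $z_j = -\infty$ there is nothing to check; otherwise $z_j = q_{i'} - M_{i'j}$ for some $i'$ with $u_{i'}v_j \in F$ a red edge, and the defining minimality property of red edges gives $q_{i'} - M_{i'j} \leq q_i - M_{ij}$, hence $M_{ij} \tdot z_j = M_{ij} + z_j \leq q_i$. Note that here we implicitly use assumption~\ref{iit} so that $q_i$ is finite, and the minimality inequality makes sense even when $M_{ij} = -\infty$ (in which case $M_{ij} + z_j = -\infty$).

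For the reverse inequality $\geq$, I would use the covering hypothesis: since $F$ covers all row nodes, some edge of $F$ is incident to $u_i$. If that edge is the blue edge $u_iv_i$, then $w_i = q_i$ and we are done. If instead it is a red edge $u_iv_j$, then $z_j = q_i - M_{ij}$ by definition of $\alpha(F)$; the key observation to justify here is that $M_{ij}$ must then be finite, for assumption~\ref{it} implies that the minimum $\min_k(q_k - M_{kj})$ is finite, and a red edge $u_i v_j$ realizes this minimum. Consequently $M_{ij} \tdot z_j = q_i$, giving the desired lower bound.

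The only slightly subtle point is the handling of possible $-\infty$ entries in $M$, which I would address by the finiteness-of-minimum remark above. Everything else is a direct unpacking of the definitions of blue/red edges and of the map $\alpha$.
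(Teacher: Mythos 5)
Your proof is correct and follows essentially the same argument as the paper: the upper bound $w_i \tplus M_i \tdot z \leq q_i$ from the minimality defining red edges, and equality from the edge of $F$ covering $u_i$. The extra remarks on finiteness (via conditions~\ref{it} and~\ref{iit}) are accurate and only make explicit what the paper leaves implicit.
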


\begin{proof}
Let $(w,z) \coloneqq \alpha(F)$, and consider $i \in [n]$. We trivially have $w_i \leq q_i$. Moreover, for every $j \in [n]$ such that $z_j > -\infty$, there exists $i' \in [n]$ such that  $z_j = q_{i'} - M_{i'j} \leq q_i - M_{ij}$ by construction of $G$. Therefore, $w_i \tplus M_i \tdot z \leq q_i$, and equality holds because $F$ contains an edge of the form $u_i v_j$. If this edge is blue, then $w_i = q_i$, and if it is red, $M_{ij} \tdot z_j = q_i$. We deduce that $w \tplus M \tdot z = q$.
\end{proof}

The following lemma shows that finding a solution of~\eqref{TNECP} reduces to finding a nontrivial perfect matching in $G$. The two solution sets are in one-to-one correspondence in the case where the instance of~\eqref{TNECP} is nondegenerate. We say that an instance of~\eqref{TNECP} is \emph{nondegenerate} if, for each $j \in [n]$, the minimum of the terms $q_k - M_{kj}$ ($k \in [n]$) is attained exactly once. Equivalently, every column node in $G$ is incident to precisely one red edge. 

\begin{lemma}\label{lemma:sol_TNECP}
If $F \subset E$ is a perfect matching containing at least one red edge, then $\alpha(F)$ is a solution of~\eqref{TNECP}. Moreover, if the instance is nondegenerate, any solution of~\eqref{TNECP} arises in this way.
\end{lemma}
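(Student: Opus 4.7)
The plan is to prove the two directions separately, with the forward direction being a routine verification and the converse requiring nondegeneracy in an essential way. For the forward direction, given a perfect matching $F \subset E$ containing at least one red edge, the first step is to invoke Lemma~\ref{lemma:feasibility}: since any perfect matching covers all row nodes, one immediately gets $w \tplus M \tdot z = q$ for $(w,z) = \alpha(F)$. For the complementarity condition $\trans{w} \tdot z = -\infty$, I argue coordinatewise. At each column node $v_i$, exactly one edge of $F$ is incident; if this edge is the blue edge $u_i v_i$, then $w_i = q_i$ and no red edge at $v_i$ lies in $F$, so $z_i = -\infty$; otherwise the matched edge is red, which excludes the blue edge $u_i v_i$ from $F$, so $w_i = -\infty$. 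Either way $w_i \tdot z_i = -\infty$. Finally, $z \neq -\infty$ because any red edge in $F$ produces a finite coordinate of $z$.

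For the converse, assume nondegeneracy and fix a solution $(w,z)$. Denote by $\pi(j)$ the unique index attaining $\min_k (q_k - M_{kj})$, and set $T \coloneqq \{j : z_j \neq -\infty\}$, which is nonempty. The key structural step is to show that, for every $j \in T$, one has $z_j = q_{\pi(j)} - M_{\pi(j), j}$, and that $\pi$ restricts to a permutation of $T$. To do this, I use that $w_j = -\infty$ for $j \in T$ by complementarity, so the $j$-th row of $w \tplus M \tdot z = q$ rewrites as $\max_k (M_{jk} + z_k) = q_j$. Picking a maximizer $k$ (necessarily in $T$), the inequality $z_k \leq q_i - M_{ik}$, valid for every $i$, is saturated at $i = j$; nondegeneracy then forces $j = \pi(k)$, so $z_k$ takes the extremal value. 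The resulting map $j \mapsto k$ from $T$ to $T$ is injective (since $\pi$ recovers $j$ from $k$), hence a bijection of the finite set $T$, and $\pi|_T$ is its inverse.

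Once the structure of $z$ is pinned down, the values of $w$ follow automatically. For $i \in T$, complementarity forces $w_i = -\infty$. For $i \notin T$, I expand $\max_k (M_{ik} + z_k) = \max_{k \in T} (M_{ik} + q_{\pi(k)} - M_{\pi(k), k})$; since $\pi(T) = T$ and $i \notin T$, no $k \in T$ satisfies $\pi(k) = i$, so the uniqueness part of nondegeneracy makes every term strictly less than $q_i$, which forces $w_i = q_i$. With $(w,z)$ fully described, I define $F \coloneqq \{u_{\pi(j)} v_j : j \in T\} \cup \{u_i v_i : i \notin T\}$; the bijection $\pi|_T$ ensures that $F$ is a perfect matching, it contains a red edge since $T$ is nonempty, and a direct inspection gives $\alpha(F) = (w,z)$.

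The main obstacle is the argument that, on its support, $z_j$ must equal the extremal value $q_{\pi(j)} - M_{\pi(j), j}$. This is where nondegeneracy is essential: without it, several matchings could produce the same solution, or there could exist solutions whose $z$-entries do not take this extremal form (so that no matching represents them). The rest of the verification is routine matching-theoretic bookkeeping enabled by the bijectivity of $\pi|_T$.
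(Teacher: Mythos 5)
Your proof is correct and takes essentially the same route as the paper's: the forward direction is the same column-by-column degree argument on top of Lemma~\ref{lemma:feasibility}, and your converse builds the same perfect matching of ``tight'' edges, merely phrased through the explicit permutation $\pi|_T$ instead of the paper's degree count on row and column nodes. The additional verification that $z_j$ attains the extremal value $q_{\pi(j)} - M_{\pi(j),j}$ on its support, and hence that $\alpha(F) = (w,z)$, is a detail the paper leaves implicit, so there is nothing to fix.
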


\begin{proof}
Let $(w,z) \coloneqq \alpha(F)$. We know that $w \tplus M \tdot z = q$ by Lemma~\ref{lemma:feasibility}. Moreover, for all $j$, the column node $v_j$ has degree $1$ in $F$. Thus $v_j$ has no red or no blue incident edge, which means that $w_j = -\infty$ or $z_j = -\infty$. This implies $w_j \tdot z_j = -\infty$. Finally, $z \neq -\infty$ since there is at least one red edge in $F$. We deduce that $\alpha(F)$ is a solution of~\eqref{TNECP}.

We now suppose that the instance is nondegenerate. Let $(w,z)$ be a solution of~\eqref{TNECP}. We define $F$ as the subset of $E$ consisting of the edges $u_i v_i$ if $w_i > -\infty$, and $u_i v_j$ if $z_j = q_i - M_{ij}$. Every row node is incident to at least one edge of $F$ because $(w,z)$ satisfies $w \tplus M \tdot z = q$. In addition, every column node is incident to at most one red edge, since the instance is nondegenerate. As $w_j \tdot z_j = -\infty$ for all $j$, no column node can be simultaneously incident to a blue edge and a red edge of $F$. We deduce that $F$ is a perfect matching.
\end{proof}

\begin{example}\label{ex:tnecp}
Consider the following instance of~\eqref{TNECP} over $(w,z) \in \T^4 \times \T^4$:
\[
\left\{
\begin{aligned}
& \begin{array}{ccccllll@{{}={}}c}
w_1 & & & & {}\tplus z_1 & {}\tplus ((-2) \tdot z_2) & {}\tplus (3 \tdot z_3) & {}\tplus ((-5) \tdot z_4) & 0 \\
& w_2 & & & {}\tplus (4 \tdot z_1) & & {}\tplus ((-2) \tdot z_3) & & 0 \\
& & w_3 & & {}\tplus (2 \tdot z_1) & {}\tplus z_2 & & {}\tplus ((-1) \tdot z_4) & 0 \\ 
& & & w_4 & & {}\tplus z_2 &  & {}\tplus ((-1) \tdot z_4) & 0
\end{array} \\[.5ex]
& \trans{w} \tdot z = -\infty \\
& z \neq -\infty
\end{aligned}\right.
\]
The associated graph is provided in \Cref{fig:proof}(a). The perfect matching $F$ depicted in \Cref{fig:proof}(d) provides the solution $\alpha(F) = (w,z)$ where $w = \begin{psmallmatrix} -\infty \\ -\infty \\ -\infty \\ 0 \end{psmallmatrix}$ and $z = \begin{psmallmatrix} -4 \\ 0 \\ -3 \\ -\infty \end{psmallmatrix}$.
\end{example}

\begin{figure}
\begin{center}
\begin{tikzpicture}[scale=1.1,
row/.style={regular polygon,regular polygon sides=4, inner sep=-1cm, minimum size=1cm,thick,draw},
col/.style={circle, inner sep=-.2cm, minimum size=.75cm,thick,draw},
be/.style={blue!50,very thick,draw},
re/.style={red!80,very thick,draw}
]
\begin{scope}[xscale=1.5,yscale=.9]
\node[row] (u1) at (0,0) {$u_1$};
\node[row] (u2) at (1,0) {$u_2$};
\node[row] (u3) at (2,0) {$u_3$};
\node[row] (u4) at (3,0) {$u_4$};

\node[col] (v1) at (0,-1.8) {$v_1$};
\node[col] (v2) at (1,-1.8) {$v_2$};
\node[col] (v3) at (2,-1.8) {$v_3$};
\node[col] (v4) at (3,-1.8) {$v_4$};

\draw[be] (u1) -- (v1) (u2) -- (v2) (u3) -- (v3);
\draw[be] (u4) -- (v4);
\draw[re] (u1) -- (v3) (u2) -- (v1);
\draw[re] (v2) -- (u3);
\draw[re] (v2) -- (u4);
\draw[re] (v4) -- (u3);
\draw[re] (v4) to[bend right] (u4);

\node[anchor=base] at (1.5,-3) {(a)};
\end{scope}
\begin{scope}[shift={(8,0)},xscale=1.5,yscale=.9]
\node[row] (u1) at (0,0) {$u_1$};
\node[row] (u2) at (1,0) {$u_2$};
\node[row] (u3) at (2,0) {$u_3$};
\node[row] (u4) at (3,0) {$u_4$};

\node[col] (v1) at (0,-1.8) {$v_1$};
\node[col] (v2) at (1,-1.8) {$v_2$};
\node[col] (v3) at (2,-1.8) {$v_3$};
\node[col] (v4) at (3,-1.8) {$v_4$};

\draw[be] (u1) -- (v1) (u2) -- (v2) (u3) -- (v3);
\draw[be] (u4) -- (v4);
\draw[re] (u1) -- (v3) (u2) -- (v1);
\draw[re] (v2) -- (u3);
\draw[re] (v4) -- (u3);

\node[anchor=base] at (1.5,-3) {(b)};
\end{scope}
\begin{scope}[shift={(0,-4.2)},xscale=1.5,yscale=.9]
\node[row] (u1) at (0,0) {$u_1$};
\node[row] (u2) at (1,0) {$u_2$};
\node[row] (u3) at (2,0) {$u_3$};
\node[row] (u4) at (3,0) {$u_4$};

\node[col] (v1) at (0,-1.8) {$v_1$};
\node[col] (v2) at (1,-1.8) {$v_2$};
\node[col] (v3) at (2,-1.8) {$v_3$};
\node[col] (v4) at (3,-1.8) {$v_4$};

\draw[be] (u1) -- (v1) (u2) -- (v2) (u3) -- (v3);
\draw[re] (u1) -- (v3) (u2) -- (v1);
\draw[re] (v2) -- (u3);

\node[anchor=base] at (1.5,-3) {(c)};
\end{scope}
\begin{scope}[shift={(8,-4.2)},xscale=1.5,yscale=.9]
\node[row] (u1) at (0,0) {$u_1$};
\node[row] (u2) at (1,0) {$u_2$};
\node[row] (u3) at (2,0) {$u_3$};
\node[row] (u4) at (3,0) {$u_4$};

\node[col] (v1) at (0,-1.8) {$v_1$};
\node[col] (v2) at (1,-1.8) {$v_2$};
\node[col] (v3) at (2,-1.8) {$v_3$};
\node[col] (v4) at (3,-1.8) {$v_4$};

\draw[be] (u4) -- (v4);
\draw[re] (u1) -- (v3) (u2) -- (v1);
\draw[re] (v2) -- (u3);

\node[anchor=base] at (1.5,-3) {(d)};
\end{scope}
\end{tikzpicture}
\end{center}
\caption{Illustration of the proof of \Cref{thm:tnecp} on the instance of \Cref{ex:tnecp}. (a) The graph associated with the instance. (b) A subgraph $G'$ in which every column node is incident to precisely one red edge. (c) A cycle $C$ in $G'$. (d) The symmetric difference of the set $F_0$ of blue edges with the cycle~$C$.}\label{fig:proof}
\end{figure}
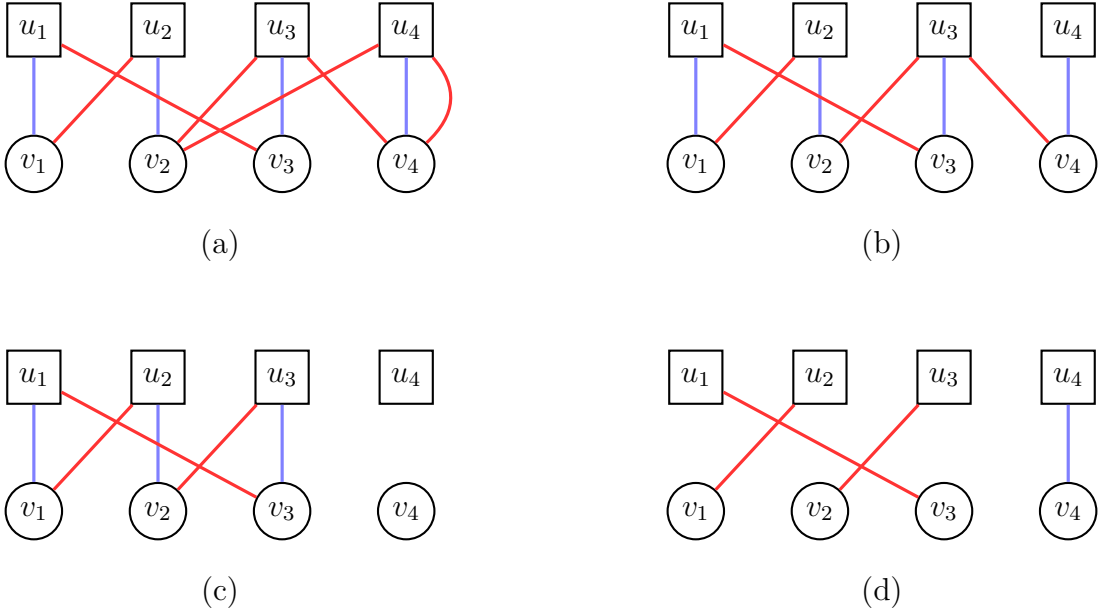

The proof of Theorem~\ref{thm:tnecp} relies on the fact that the graph $G$ contains a cycle that alternates between blue and red edges. We refer to \Cref{fig:proof} for an illustration of the steps of the proof. In the paper, the term ``cycle'' refers to elementary cycles. 
\begin{proof}[Proof of Theorem~\ref{thm:tnecp}]
The set $F_0$ of blue edges forms a perfect matching in $G$. Up to removing red edges in $G$, we can assume that every column node is incident to precisely one red edge. In this case, the graph $G$ has $2n$ edges and $2n$ vertices. Thus, there is a cycle $C$. Moreover, the edges in $C$ alternate between blue and red edges because every column node in the cycle is incident to precisely one blue edge and one red edge, and every row node is incident to exactly one blue edge. Then $F_0 \diffsym C$ constitutes a perfect matching in $G$ with at least one red edge, and thus $\alpha(F_0 \diffsym C)$ is a solution of~\eqref{TNECP} by Lemma~\ref{lemma:sol_TNECP}. The graph $G$, the cycle $C$ and the point $\alpha(F_0 \diffsym C)$ can all be computed in polynomial time.
\end{proof}

Exploiting further the properties of $G$, we can even provide a lower bound on the number of solutions of~\eqref{TNECP}.

\begin{theorem}\label{thm:count}
The number of solutions of~\eqref{TNECP} is at least $2^\kappa - 1$ where~$\kappa$ is the number of connected components of $G$, and this bound is tight when the instance is nondegenerate.	
\end{theorem}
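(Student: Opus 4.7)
The plan is to count perfect matchings of $G$ having at least one red edge, in view of Lemma~\ref{lemma:sol_TNECP}.

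I first handle the nondegenerate case, which will also give the tightness. Here each column node $v_j$ has a unique red neighbor, which I denote $u_{\pi(j)}$, defining a function $\pi \colon [n] \to [n]$. I will show that perfect matchings of $G$ are in bijection with the subsets $S \subseteq [n]$ on which $\pi$ restricts to a bijection, equivalently the unions of cycles of the functional digraph of $\pi$. Given such an $S$, take the red edge $u_{\pi(j)} v_j$ for $j \in S$ and the blue edge $u_j v_j$ for $j \notin S$. Conversely, letting $S$ be the set of indices $j$ whose incident red edge is selected, the fact that each row node is covered exactly once forces $\pi|_S$ to be injective and $\pi(S) \subseteq S$, so $\pi|_S$ is a bijection of $S$; a standard forward/backward orbit argument shows that such $S$ are precisely the disjoint unions of cycles of $\pi$.

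Next I relate the number of cycles of $\pi$ to $\kappa$. Contracting every blue edge $u_i v_i$ into a single vertex labeled $i$ transforms $G$ into the undirected multigraph on $[n]$ with edges $\{j, \pi(j)\}$, that is, the undirected version of the functional graph of $\pi$. Edge contraction preserves the number of connected components, and each weakly connected component of a functional graph contains exactly one cycle, so the number of cycles of $\pi$ equals $\kappa$. Consequently $G$ has exactly $2^\kappa$ perfect matchings; discarding the all-blue one (which corresponds to $S = \emptyset$ and to the excluded trivial solution) leaves exactly $2^\kappa - 1$ solutions of~\eqref{TNECP}, which proves tightness.

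For the general lower bound I reduce to the nondegenerate case. I pick a subgraph $G' \subseteq G$ in which each column node retains exactly one of its red edges; the corresponding subsystem is nondegenerate, and since $G'$ is obtained from $G$ by deleting edges its number of connected components $\kappa'$ satisfies $\kappa' \geq \kappa$. Applying the analysis above to $G'$ yields $2^{\kappa'} - 1 \geq 2^\kappa - 1$ perfect matchings of $G'$ with at least one red edge. Each of these is also a perfect matching of $G$ with at least one red edge, hence a solution of~\eqref{TNECP} via $\alpha$ by Lemma~\ref{lemma:sol_TNECP}, and they remain pairwise distinct because $\alpha$ is injective on perfect matchings of the nondegenerate graph $G'$. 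The main conceptual step is the bijection between perfect matchings of $G$ and union-of-cycles subsets of $\pi$, together with the contraction argument matching the cycles of $\pi$ with the connected components of $G$; the degenerate case then follows by the monotonicity $\kappa' \geq \kappa$.
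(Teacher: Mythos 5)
Your proof is correct and follows essentially the same route as the paper's: reduce to a subgraph $G'$ retaining one red edge per column node, show that each connected component contains exactly one alternating cycle (your functional-digraph/contraction argument for $\pi$ is equivalent to the paper's outdegree-one orientation of $G'$), and parametrize the perfect matchings by unions of cycles. The only step you assert without justification is the injectivity of $\alpha$ on perfect matchings of $G'$, which the paper dispatches in one line using condition~\ref{iit}: two distinct perfect matchings of $G'$ must differ in the blue/red status of some column node $v_j$, hence in whether $w_j$ equals $q_j > -\infty$ or $-\infty$.
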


\begin{proof}
As in the proof of Theorem~\ref{thm:tnecp}, we remove red edges from $G$ so that every column node is incident to precisely one red edge. We denote by $G'$ the graph arising this way. In $G'$, every column node is incident to precisely one red edge and one blue edge. Hence for any two distinct perfect matchings of $G'$, there is a column node that is incident to a blue edge in one perfect matching and to a red edge in the other. Condition~\ref{iit} implies then that any two distinct perfect matchings $F$ of $G'$ provide distinct points $\alpha(F)$. For the perfect matchings $F \neq F_0$, this provides distinct solutions of~\eqref{TNECP} by Lemma~\ref{lemma:sol_TNECP}. Since $G'$ has at least as many connected components as $G$, we get the lower bound $2^\kappa - 1$ on the number of solutions of~\eqref{TNECP} once we know that there are at least $2^{\kappa'}-1$ perfect matchings in $G'$ in addition to the perfect matching $F_0$ formed by the blue edges. (Here, $\kappa'$ is the number of connected components in $G'$.) We prove now this lower bound on the number of perfect matchings in $G'$.

As shown in the proof of Theorem~\ref{thm:tnecp}, the edges of any cycle of $G' = (V,E')$ alternate between blue and red edges.
This implies that the symmetric difference of $F_0$ with any collection of node-disjoint cycles is again a perfect matching. Let $\mathcal{C}$ be the set of collections of node-disjoint cycles. The function $\phi \colon C \mapsto F_0 \diffsym C$ is an injective map from $\mathcal{C}$ to the set of perfect matchings. To get the desired lower bound on the number of perfect matchings in $G'$, it is thus enough to compute the cardinality of $\mathcal{C}$. Actually, every connected component of $G'$ contains only one cycle: indeed, orienting the blues edges from row to column nodes and the red edges the other way around, we get a graph whose vertices all have their outdegree equal to one; each connected component of a graph admitting such an orientation contains exactly one cycle. The cardinality of $\mathcal{C}$ is therefore equal to $2^{\kappa'}$, which provides the same lower bound on the number of perfect matchings of $G'$, as desired.

The symmetric difference of any two perfect matchings is an element of $\mathcal{C}$. Therefore, the map $\phi$ is surjective, and thus a bijection between $\mathcal{C}$ and the set of perfect matchings in $G'$. Since $G' = G$ in the nondegenerate case, we get that the number of perfect matchings in $G$ distinct from $F_0$ is exactly $2^{\kappa}-1$. We conclude by remarking that the function $\alpha$ bijectively maps perfect matchings distinct from $F_0$ to the solutions of~\eqref{TNECP}, by Lemma~\ref{lemma:sol_TNECP} and the arguments of the first paragraph.
\end{proof}

\section{Tropical bases and their classical realization}\label{sec:trop_bases}

The main purpose of this section is to prove Theorem~\ref{th:support}. To achieve this, we define the notion of tropical bases for tropical problems of the form $A \tdot x = b , \, x \in \T^d$. These latter problems are analogues of linear programming feasibility problems of the form $A x = b, x \in \R_{\geq 0}^d$, where $A$ and $b$ have nonnegative entries (recall that any element of $\T$ is implicitly ``nonnegative'' in the tropical sense). In comparison with the tropical bases introduced in~\cite{AllamigeonSIDMA15}, our notion of tropical bases applies to systems with nonnegative entries only, but that can be degenerate. We also define the dominance condition used in the statement of Theorem~\ref{th:support}, and show that, under this condition, the classical bases of linear programming feasibility problems are the same as the tropical bases of an associated tropical problem. We remark that tropical feasibility problems of the form $A \tdot x = b , \, x \in \T^d$ have been studied since the early age of tropical algebra; see~\eg~\cite{CuninghameGreen79,Zimmermann1981}. 
In particular, it can be decided in linear time if a system $A \tdot x = b, \, x \in \T^d$ has a solution. We emphasize that, in this section, we are interested in the structure of the solution sets of such problems (in terms of tropical bases) in order to handle complementarity constraints like in~\eqref{TNECP}.

\subsection{Tropical bases of nonnegative systems}\label{subsec:trop_bases}

We consider a tropical system $A \tdot x = b, \, x \in \T^d$, where $A \in \T^{n \times d}$ and $b \in \T^n$. 
A \emph{tropical (feasible) basis} is a subset $B \subset [d]$ of cardinality $n$ such that there exists a bijective map $\phi$ from $[n]$ to $B$ satisfying the following condition: for each $i \in [n]$, the quantity $b_i - A_{i \phi(i)}$ is in $\T$ and minimal among the terms $b_k - A_{k \phi(i)}$ for $k \in [n]$ (with the convention $-\infty - (-\infty) = +\infty$). 
A \emph{basic solution} associated with a basis $B$ is a feasible solution whose support (\ie, the index set of entries distinct from $-\infty$) is included in $B$. To every basis is associated at least one basic solution, namely the solution defined by $x_{\phi(i)} \coloneqq \min_k b_k - A_{k \phi(i)}$ for $i\in [n]$ and $x_j \coloneqq -\infty$ for $j \notin \phi([n])$. Unlike the classical case, there may be several basic solutions associated to a basis.

A tropical basis $B$ is \emph{nondegenerate} if for each $i \in [n]$, the minimum $\min_k (b_k - A_{k \phi(i)})$ belongs to $\R$ and is uniquely attained. Equivalently, the basis $B$ is nondegenerate if and only if every feasible solution whose support is included in $B$ has actually $B$ as support. In case $B$ is nondegenerate, there is a unique basic solution.  By extension, the instance $(A,b)$ is \emph{nondegenerate} if all tropical bases are nondegenerate. When there exists a basis, Lemma~\ref{lemma:trop_pivot} shows that every $j \in [d]$ such that the $j$th column of $A$ is not the $-\infty$ vector is contained in a basis. Therefore, when there is a basis, the instance $(A, b)$ is nondegenerate if and only if for each $j \in [d]$ such that the $j$th column of $A$ is not the $-\infty$ vector, the minimum $\min_k (b_k - A_{kj})$ belongs to $\R$ and is uniquely attained. In particular, under Condition~\ref{it}, the nondegeneracy of the instance $(A, q)$ with $A=\begin{pmatrix}
I & M \end{pmatrix}$ is equivalent to the nondegeneracy of the instance $(M, q)$ as defined in Section~\ref{sec:tnecp}, \ie, for each $j \in [n]$, the minimum $\min_i (q_i - M_{ij})$ is uniquely attained.

Deciding the existence of a basis reduces to deciding the existence of a matching of cardinality $n$ in a bipartite graph. It can thus be done in polynomial time. With the characterization above (uniqueness of the minimum for each $j$), nondegeneracy can also be decided in polynomial time.

\begin{remark}
In Section~\ref{sec:tnecp}, we have introduced a bipartite graph associated with~\eqref{TNECP}, whose edges are colored in blue and red, and proved that a certain point $\alpha(F)$ is a feasible solution when $F$ is a perfect matching with at least one red edge. It is not difficult to see that this perfect matching is closely related to the aforementioned matching used to decide the existence of a basis, and that $\alpha(F)$ is actually a basic solution of the system $w \tplus M \tdot z = q, \, (w,z) \in \T^n \times \T^n$.
\end{remark}

\begin{lemma}\label{lemma:trop_pivot}
Let $B$ be a tropical basis, and $j \notin B$. If the $j$th column of $A$ has at least one entry distinct from $-\infty$, then there exists a basis $B'$ included in $B \cup \{j\}$ and distinct from $B$. Moreover, this basis is unique if the instance $(A, b)$ is nondegenerate. 
\end{lemma}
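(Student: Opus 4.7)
The plan is to recast the tropical basis condition as a matching problem in a bipartite graph, carry out a direct exchange for existence, and then exploit the rigidity imposed by nondegeneracy for uniqueness.

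Introduce the bipartite graph $H$ whose two vertex classes are $[n]$ (rows) and $[d]$ (columns), with an edge between $i$ and $\ell$ whenever $b_i - A_{i\ell} \in \T$ and $b_i - A_{i\ell} = \min_k(b_k - A_{k\ell})$. Then a subset $B \subseteq [d]$ of cardinality $n$ is a tropical basis if and only if $H$ admits a matching saturating $[n]$ whose image equals $B$, the bijection $\phi$ of the definition being such a matching. Since the $j$th column of $A$ contains some finite entry $A_{k_0 j}$, we have $b_{k_0} - A_{k_0 j} \in \T$, so $\min_k(b_k - A_{kj}) \in \T$ and is attained at some $i_j \in [n]$, which is thus an $H$-neighbor of $j$. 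Setting $\phi$ equal to the bijection witnessing that $B$ is a basis and $\ell_j := \phi(i_j) \in B$, I define $\phi'$ to coincide with $\phi$ on $[n] \setminus \{i_j\}$ and to map $i_j$ to $j$. Every edge $\{i, \phi'(i)\}$ lies in $H$---by definition of $B$ for $i \neq i_j$ and by the choice of $i_j$ for $i = i_j$---so $B' := (B \setminus \{\ell_j\}) \cup \{j\}$ is a basis, distinct from $B$ since $j \in B' \setminus B$.

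For the uniqueness assertion, assume $(A,b)$ is nondegenerate, so every tropical basis $B^*$ is nondegenerate: unfolding the definition, each column $\ell \in B^*$ has a unique $H$-neighbor, namely $(\phi^*)^{-1}(\ell)$. Now let $B''$ be any basis included in $B \cup \{j\}$ and distinct from $B$; then $j \in B''$ and $B'' = (B \setminus \{\ell''\}) \cup \{j\}$ for some $\ell'' \in B$. Call $\phi''$ the bijection associated with $B''$. Nondegeneracy of $B''$ forces $\phi''(i_j) = j$ (since $i_j$ is the unique $H$-neighbor of $j \in B''$), while nondegeneracy of $B$ forces $\phi''(\phi^{-1}(\ell)) = \ell$ for each $\ell \in B \setminus \{\ell''\}$ (since $\phi^{-1}(\ell)$ is the unique $H$-neighbor of $\ell$). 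For these two constraints to be compatible, $i_j$ must not lie in $\phi^{-1}(B \setminus \{\ell''\}) = [n] \setminus \{\phi^{-1}(\ell'')\}$, which forces $\ell'' = \phi(i_j) = \ell_j$ and therefore $B'' = B'$.

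The argument is essentially a direct matching exchange; the only genuinely subtle point is noting that the uniqueness statement hinges on the nondegeneracy of several bases at once---in particular of both $B$ and $B''$---rather than merely of $B$, so that the $H$-neighbors of all the columns involved (including the newly inserted~$j$) are uniquely determined.
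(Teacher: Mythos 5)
Your proof is correct and takes essentially the same route as the paper's: the existence part is the same single-element exchange (replace $\phi(i_j)$ by $j$ at the row $i_j$ attaining $\min_k(b_k-A_{kj})$), and the uniqueness part is the same observation that, under nondegeneracy, the columns $j$ and $\phi(i_j)$ both require the unique row $i_j$ and hence cannot coexist in a basis contained in $B\cup\{j\}$. The bipartite-matching language is just a repackaging of the bijection $\phi$ in the definition of a tropical basis (an interpretation the paper itself mentions), so nothing substantive differs.
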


The first part of the lemma ensures that a pivot operation can be defined over (degenerate) bases, even if several feasible solutions may have the same basis as support. This is consistent with the combinatorial abstractions of pivoting operations, especially the one developed by~\cite{Todd} in the context of complementarity problems. 

\begin{proof}[Proof of Lemma~\ref{lemma:trop_pivot}]
Let $\phi$ be a bijective map from $[n]$ to $B$ associated with the basis $B$. The minimum $\min_k (b_k - A_{kj})$ is in $\T$ since at least one $A_{kj}$ is distinct from $-\infty$. We take $i \in [n]$ such that $b_i - A_{ij} \in \arg\min_k (b_k - A_{kj})$, and we consider $B' \coloneqq B \setminus \{\phi(i)\} \cup \{j\}$. Then $B'$ is a basis associated with the bijective mapping $\phi'(k) = \phi(k)$ if $k \neq i$, and $\phi'(i) = j$. 

We now assume that $(A, b)$ is nondegenerate. Since $B$ and $B'$ are both nondegenerate, the minima $\min_k (b_k - A_{k \phi(i)})$ and $\min_k (b_k - A_{k \phi'(i)})$ are in $\R$, and both are uniquely attained for $k = i$. Thus, a basis included in $B \cup \{\phi'(i)\} = B' \cup \{\phi(i)\}$ cannot simultaneously contain $\phi(i)$ and $\phi'(i)$. We deduce that $B'$ is the unique basis contained $B \cup \{j\}$ and distinct from~$B$.
\end{proof}

\subsection{Classical realization of nondegenerate tropical systems}\label{subsec:realization}

We recall that a \emph{basis} of a classical system $\cla A x = \cla b, \, x \in \R_{\geq 0}^d$ is a subset $B \subset [d]$ such that the submatrix $\cla A_B$ formed by the columns of $\cla A$ indexed the elements in $B$ is a nonsingular square matrix. The \emph{basic solution} associated with $B$ is defined as the unique point satisfying $\cla A x = \cla b$ and whose support is included in $B$. The basis $B$ is \emph{feasible} if the basic solution is  nonnegative, and \emph{nondegenerate} if the support of the basic solution is equal to $B$. Note that a system $\cla A x = \cla b, \, x \in \R_{\geq 0}^d$ admits a basis if and only if the matrix $\cla A$ has full row rank.

We introduce the dominance condition. It makes use of the following terminology. A matrix $\cla M$ \emph{covers} a matrix $\cla M'$ if $\cla M - \cla M'$ is a nonnegative matrix. A matrix is \emph{columnwise normal} if its entries are in $[0,1]$ and every column has at least one $1$-entry. A columnwise normal matrix with $n$ rows satisfies the \emph{dominance condition} if the following two properties hold:
\begin{enumerate}[label=(\alph*)]
\item\label{item:a} there exists an $n \times n$ submatrix covering a permutation matrix;
\item\label{item:b} for any $n \times n$ submatrix covering a permutation matrix, the sum of each row is less than~$2$.
\end{enumerate}
Consider a system $\cla A x = \cla b, \, x \in \R_{\geq 0}^d$ such that $\cla A$ is a nonnegative matrix with a nonzero entry in every column, and $\cla b$ is a positive vector. The \emph{normalized matrix} $\norm{A}$ is defined as $(\diag {\cla b})^{-1} \cla A (\diag \cla u)^{-1}$, where $\cla u_j$ is the largest entry of the $j$th column of $(\diag {\cla b})^{-1} \cla A$. In this way, the matrix $\norm{A}$ is columnwise normal. Remark that since $\cla b_i > 0$ and $\cla u_j > 0$ for all $i,j$, normalizing the system $\cla A x = \cla b, \, x \in \R_{\geq 0}^d$ into the system $\norm{A} x = e , \, x \in \R_{\geq 0}^d$, where $e$ is the all-$1$ vector, preserves the feasible bases and the supports of the basic feasible solutions. By abuse of language, we say that the system $\cla A x = \cla b, \, x \in \R_{\geq 0}^d$ satisfies the \emph{dominance condition} if $\norm{A}$ does. 

\begin{example}
Consider the following columnwise normal matrix \[
\begin{bmatrix}
1 & 0.5 & 0.4 & 1 \\
0 & 1 & 0.2 & 0.6 \\
0.6 & 0.3 & 1 & 0.5
\end{bmatrix} 
\]
There are exactly two $3 \times 3$-submatrices that cover a permutation matrix:
\[
\begin{bmatrix}
1 & 0.5 & 0.4 \\
0 & 1 & 0.2 \\
0.6 & 0.3 & 1 
\end{bmatrix} 
\qquad \qquad 
\begin{bmatrix}
0.5 & 0.4 & 1\\
1 & 0.2 & 0.6 \\
0.3 & 1 & 0.5
\end{bmatrix}
\]
In both, it can be checked that the sum of each row is less than $2$. The considered $3\times 4$-matrix satisfies therefore the dominance condition.
\end{example}

The following result establishes the correspondence between the feasible bases of classical and tropical systems under the dominance condition. As we shall see, it plays a key role in the proof of Theorem~\ref{th:support}.

\begin{proposition}\label{prop:classical_basis}
Let $\cla A \in \R^{n \times d}_{\geq 0}$ and $\cla b \in \R^n_{\geq 0}$ such that no column of $\cla A$ is the $0$ vector and no entry of $\cla b$ is equal to $0$. Assume that that the (classical) system $\cla A x = \cla b, \, x \in \R_{\geq 0}^d$ satisfies the dominance condition. Let $\tro A \in \T^{n \times d}$ and $\tro b \in \T^n$ be such that no column of $\tro A$ is the $-\infty$ vector and no entry of $\tro b$ is equal to $-\infty$.

If the minima $\min_i (\cla b_i / \cla A_{ij})$ and $\min_i (\tro b_i - \tro A_{ij})$ are reached by the same indices (with the convention $\lambda/0 = +\infty$ for all $\lambda > 0$), then the feasible bases of the classical system are the same as the tropical bases of the tropical system $\tro A \tdot x = \tro b, \, x \in \T^d$, and both systems are nondegenerate.
\end{proposition}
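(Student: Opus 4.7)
The plan is to reduce both systems to a common normalized form and then show that, under the dominance condition, both notions of basis amount to the same combinatorial condition: the submatrix $\norm{A}_B$ covers a permutation matrix. I would first apply the normalization $(\cla A, \cla b) \mapsto (\norm{A}, e)$ defined before the proposition, together with its tropical analogue (translate rows by $-\tro b_i$ and columns by $-\max_i \tro A_{ij}$) so that $\tro b = 0$ and every column of $\tro A$ has maximum $0$. The ``same argmin'' hypothesis then says precisely that the $1$-entries of $\norm{A}$ are exactly the column-maximizers of $\tro A$. The crucial preliminary step is to show that, under dominance, every column of $\norm{A}$ has a \emph{unique} $1$-entry: if a column $j$ had $1$-entries at two distinct rows $i_1, i_2$, I would use condition~\ref{item:a} and a one-column swap to produce a permutation-covering $n \times n$ submatrix containing $j$ in which $j$ is matched with $i_1$; then row $i_2$ of this submatrix would contain two $1$-entries, forcing its row sum to be at least $2$ and contradicting condition~\ref{item:b}. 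Denoting by $i(j)$ this unique row, both ``$B$ is a tropical basis'' and ``$\norm{A}_B$ covers a permutation matrix'' rewrite identically as: the map $i(\cdot)\colon B \to [n]$ is a bijection. This also yields tropical nondegeneracy.

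I would then handle the direction ``$B$ covers a permutation matrix $\Rightarrow$ $B$ is a nondegenerate classical feasible basis''. After reordering the columns of $B$ along the covered permutation, $\norm{A}_B = I + S$ where $S \geq 0$, $S_{ii} = 0$, and condition~\ref{item:b} forces each row sum of $S$ to be strictly less than $1$. The Neumann series converges, and regrouping its terms by pairs $(2\ell, 2\ell+1)$ yields
\begin{equation*}
\norm{A}_B^{-1} e \;=\; (I+S)^{-1} e \;=\; \sum_{\ell \geq 0} S^{2\ell}(I-S)\, e \;\geq\; (I-S)\, e \;>\; 0 \,,
\end{equation*}
so $\norm{A}_B^{-1} e$ is strictly positive. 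This gives feasibility and classical nondegeneracy at once.

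The hardest step will be the converse direction: any classical feasible basis $B$ satisfies that $\norm{A}_B$ covers a permutation matrix. I would argue by contradiction. If the map $i(\cdot)\colon B \to [n]$ is not bijective, then by pigeonhole there exist two distinct columns $j_1, j_2 \in B$ sharing a common row $i_\star$, and some row $i_\bullet$ is not hit by any column of $B$. The equation $\norm{A}_B x_B = e$ at row $i_\bullet$ reads $1 = \sum_{j \in B} \norm{A}_{i_\bullet, j} x_j$ with every coefficient at most some $\eta < 1$, forcing $\sum_{j \in B} x_j > 1/\eta > 1$. The plan is then to build an auxiliary permutation-covering $n \times n$ submatrix $B^\star$ sharing as many columns as possible with $B$ (swapping the duplicated columns at multiply-hit rows for columns covering the dead rows, whose existence is guaranteed by condition~\ref{item:a}) and then to combine the row-sum controls of~\ref{item:b} applied to $\norm{A}_{B^\star}$ with the global identity $\sum_{j \in B} s_j x_j = n$ (with $s_j$ the column sum of $\norm{A}_{\cdot j}$) to produce a strict inequality violating feasibility. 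I expect the delicate part of the proof to be making this bookkeeping tight enough to actually extract the contradiction.
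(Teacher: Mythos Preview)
Your reduction to the normalized form, the argument that every column of $\norm{A}$ has a unique $1$-entry, and the Neumann-series direction (``covers a permutation $\Rightarrow$ nondegenerate classical feasible basis'') are all correct and match the paper's treatment (its Lemmas on the dominance condition and on $(I+F)^{-1}e > 0$). The identification ``$B$ is a tropical basis $\Leftrightarrow$ $\norm{A}_B$ covers a permutation matrix'' is also exactly what the paper uses.

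The gap is in the converse direction, and you have correctly flagged it as the delicate point---but the plan you sketch does not close. Concretely: condition~\ref{item:b} controls row sums only of \emph{permutation-covering} square submatrices, while by hypothesis $\norm{A}_B$ is not one; after you swap columns to build the auxiliary $B^\star$, the row-sum bound applies to $\norm{A}_{B^\star}$, not to $\norm{A}_B$, and there is no evident way to transport that control back to the basic solution $x_B$ of $\norm{A}_B x = e$. The global identity $\sum_{j\in B} s_j x_j = n$ and the dead-row inequality $\sum_{j\in B} x_j > 1$ do not by themselves contradict anything, since you have no a priori upper bound on the column sums $s_j$ or on the individual $x_j$. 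The paper avoids this obstacle by an argument you are missing: it proceeds by \emph{induction on $n$}. Letting $I$ be the set of rows hit by a $1$-entry of some column in $B$ and $i_0 \notin I$, one checks that the restricted matrix $C_{I,B}$ again satisfies the dominance condition, and then solves the auxiliary LP $\max\{\sum_{j\in B} C_{i_0 j} x_j : C_{I,B}\,x = e_I,\ x\ge 0\}$. By induction its optimal basis $B^\star\subset B$ satisfies that $C_{I,B^\star}$ covers a permutation; this forces $x^\star_j \le 1$ for all $j$ (read off the row of each $1$-entry), and a separate lemma (stating that in any submatrix with one $1$ per column and at most one $1$ per row, the non-$1$ entries in each row sum to less than~$1$) gives $\sum_{j\in B^\star} C_{i_0 j} < 1$. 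Hence the optimal value is strictly below $1$, contradicting that the original basic point of $B$ is feasible for the LP with objective value exactly $1$. This LP-plus-induction step is the missing idea; your counting approach, as stated, does not supply an equivalent mechanism to force $x_j \le 1$ on the relevant columns.
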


We start with a technical lemma that will be useful to characterize feasible bases.
\begin{lemma}\label{lemma:sub_sto}
Let $F$ be a nonnegative square matrix such that $\sum_j F_{ij} < 1$ for all $i$. Then the matrix $I+F$ is nonsingular, and the vector $(I+F)^{-1} e$ has positive entries.
\end{lemma}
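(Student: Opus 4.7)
The plan is to exploit the identity $(I+F)(I-F) = I - F^2$ to express $(I+F)^{-1}$ in terms of a nonnegative Neumann series.

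First, I would set $G \coloneqq F^2$ and verify that $G \geq 0$ with row sums strictly less than $1$. Indeed,
\[
\sum_j (F^2)_{ij} \;=\; \sum_k F_{ik} \sum_\ell F_{k\ell} \;\leq\; \sum_k F_{ik} \;<\; 1,
\]
using the row-sum hypothesis on $F$ (twice). Consequently $I - G$ is invertible and $(I-G)^{-1} = \sum_{k \geq 0} G^k$ is a nonnegative matrix.

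Next, from $(I+F)(I-F) = (I-F)(I+F) = I - G$ I would deduce that $I + F$ is nonsingular with inverse
\[
(I+F)^{-1} \;=\; (I-F)(I-G)^{-1} \;=\; (I-G)^{-1}(I-F),
\]
the two forms coinciding because $F$ and $G = F^2$ commute, whence $F$ also commutes with $(I-G)^{-1}$. Applied to the all-ones vector, this gives $(I+F)^{-1} e = (I-G)^{-1}\bigl((I-F) e\bigr)$. By hypothesis the vector $(I-F) e$ has $i$th coordinate $1 - \sum_j F_{ij} > 0$, so it is strictly positive. Since $(I-G)^{-1} = I + G + G^2 + \cdots$ with $G \geq 0$, one has $(I-G)^{-1} u \geq u$ for every $u \geq 0$; applying this to $u = (I-F) e$ yields
\[
(I+F)^{-1} e \;\geq\; (I-F) e \;>\; 0,
\]
which is the desired componentwise positivity.

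The conceptual step is spotting the factorization: the direct Neumann expansion $(I+F)^{-1} e = e - Fe + F^2 e - \cdots$ is an alternating series whose componentwise sign is not transparent, and strict row diagonal dominance alone would give only nonsingularity of $I+F$, not the positivity of $(I+F)^{-1} e$. Grouping the alternating terms in pairs amounts precisely to the identity $(I+F)(I-F) = I - F^2$ and reduces the problem to a genuinely nonnegative setting. Once that is in place, the remaining verifications (row sums of $F^2$, commutativity of $F$ and $F^2$, and the elementary inequality $(I-G)^{-1} u \geq u$ for $u \geq 0$) are routine.
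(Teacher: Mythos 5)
Your proof is correct and follows essentially the same route as the paper: both arguments hinge on the factorization $I - F^2 = (I-F)(I+F)$, the nonnegative Neumann series for $(I-F^2)^{-1}$, and the observation that $(I-F)e$ has entries $1 - \sum_j F_{ij} > 0$. The only cosmetic difference is that you verify the row-sum bound for $F^2$ by a direct double-sum computation, whereas the paper invokes submultiplicativity of the induced $\infty$-norm.
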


\begin{proof}
The matrix $I+F$ is nonsingular because the norm $\|F\| \coloneqq \sup_{x \neq 0} \frac{\| F x \|_\infty}{\| x \|_\infty} = \max_i \sum_j | F_{ij}|$ of the matrix $F$ is strictly less than $1$. Similarly, $\|F^2\| \leq \| F\|^2 < 1$, thus $I-F^2$ is nonsingular, with inverse $\sum_{i = 0}^\infty F^{2i}$. Since $I - F^2 = (I - F) (I + F)$, we have
\[
(I+F)^{-1} e = (I-F^2)^{-1} (I - F) e = \Bigl(I + \sum_{i = 1}^\infty F^{2i}\Bigr) (I - F) e \, .
\]
The entries of $(I - F) e$ are of the form $1 - \sum_j F_{ij} > 0$. As $F$ is a nonnegative matrix, we have $(I + F)^{-1} e \geq (I - F) e$. We deduce that $(I + F)^{-1} e$ has positive entries.
\end{proof}

The next two lemmas are related with matrices satisfying the dominance condition.
\begin{lemma}\label{lemma:dom1}
A matrix that satisfies the dominance condition has full row rank, and every column contains precisely one $1$-entry.
\end{lemma}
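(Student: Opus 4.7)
The plan is to handle the two conclusions separately: first the uniqueness of the $1$-entry in each column (argued by contradiction from condition~\ref{item:b}), then the full row rank (argued by exhibiting a nonsingular $n \times n$ submatrix using Lemma~\ref{lemma:sub_sto}).

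First I would establish that every column has precisely one $1$-entry. Since the matrix is columnwise normal, every column has at least one such entry, so suppose for contradiction that some column~$j$ has two $1$-entries at rows $i_1 \neq i_2$. By condition~\ref{item:a}, there exists an $n \times n$ submatrix indexed by a set $J$ of columns, together with a bijection $\phi \colon [n] \to J$, such that the entry at row~$k$ and column $\phi(k)$ equals $1$ for every $k$. If $j \notin J$, I replace $\phi(i_1)$ by $j$: define $J' \coloneqq (J \setminus \{\phi(i_1)\}) \cup \{j\}$ and $\phi'$ to be $\phi$ except $\phi'(i_1) = j$. Since $\bar{A}_{i_1,j} = 1$, the new submatrix still covers a permutation matrix; if instead $j \in J$, it must coincide with $\phi(i_1)$ or $\phi(i_2)$, and we can relabel so that $\phi(i_1) = j$ without changing the submatrix. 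In both cases, we end up with an $n \times n$ submatrix covering a permutation matrix in which row $i_2$ contains a $1$ at column $\phi'(i_2)$ and another $1$ at column $j$, and these two columns are distinct. The row sum of row $i_2$ is therefore at least $2$, contradicting condition~\ref{item:b}.

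Next I would prove that the matrix has full row rank. By condition~\ref{item:a}, there exists an $n \times n$ submatrix $S$ covering a permutation matrix. After independently permuting the rows and the selected columns (which affects neither rank nor the dominance condition), we may assume that $S_{ii} = 1$ for every $i \in [n]$. Write $S = I + F$ where $F \coloneqq S - I$; since the entries of $S$ lie in $[0,1]$ and the diagonal of $S$ is entirely $1$, the matrix $F$ is nonnegative with $F_{ii} = 0$, and moreover
\[
\sum_j F_{ij} = \sum_{j \neq i} S_{ij} = \Bigl(\sum_j S_{ij}\Bigr) - 1 < 1
\]
by condition~\ref{item:b}. Lemma~\ref{lemma:sub_sto} then implies that $S = I + F$ is nonsingular, so the submatrix has rank $n$, and hence the full matrix has full row rank.

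The main delicate point is the modification of the covering submatrix in the first part: one needs to check that swapping one column preserves the property of covering a permutation matrix, and that the resulting row $i_2$ indeed contains two distinct $1$-entries. Once this case analysis is carried out, both conclusions follow directly, the second being an immediate application of Lemma~\ref{lemma:sub_sto}.
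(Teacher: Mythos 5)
Your proof takes essentially the same route as the paper's: full row rank via writing a covering submatrix as $I+F$ and invoking Lemma~\ref{lemma:sub_sto}, and uniqueness of the $1$-entry by swapping the offending column into a covering submatrix to violate condition~\ref{item:b}. One assertion in your case analysis is wrong, though: if $j \in J$, there is no reason for $j$ to coincide with $\phi(i_1)$ or $\phi(i_2)$; it could be $\phi(k)$ for some $k \notin \{i_1, i_2\}$, and your proposed relabeling (which would require $\bar{A}_{k,\phi(i_1)} = 1$) is not available there. Fortunately that case is immediate: the original submatrix $C_J$ already contains column $j$, so row $i_1$ of $C_J$ has $1$-entries at the two distinct columns $\phi(i_1)$ and $j$, and condition~\ref{item:b} is contradicted without any swap. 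With that case added, the argument is complete and matches the paper's.
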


\begin{proof}
Let $C$ be a matrix that satisfies the dominance condition. We pick a submatrix $C_0$ of~$C$ that covers a permutation matrix. Up to permuting its rows, $C_0$ writes as $I + F$ where~$F$ satisfies the condition of Lemma~\ref{lemma:sub_sto}. Thus, $C_0$ is nonsingular, which proves that $C$ has full row rank.

Suppose now that the $j$th column of $C$ contains at least two $1$-entries. By exchanging one column of $C_0$ with this column, we can build a submatrix of $C$ that covers a permutation matrix and in which there is a row containing at least two $1$-entries. This contradicts the dominance condition.
\end{proof}

\begin{lemma}\label{lemma:dom2}
Consider a matrix $C$ that satisfies the dominance condition, and a submatrix $C'$ of $C$. Suppose that $C'$ has one $1$-entry in every column and at most one $1$-entry in each row. Then, the sum of the non-$1$-entries in each row of $C'$ is less than $1$.
\end{lemma}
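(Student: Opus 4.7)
The plan is to extend the submatrix $C'$ to an $n \times n$ submatrix of $C$ that covers a permutation matrix, and then invoke condition~\ref{item:b} of the dominance condition to bound each row sum by~$2$. The backbone of the argument will be \Cref{lemma:dom1}, which guarantees that every column of $C$ carries exactly one $1$-entry; I will denote by $\sigma(j) \in [n]$ the row index of the unique $1$-entry in column~$j$, where $d$ stands for the number of columns of $C$.

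First I would identify $C'$ with the set $S \subset [d]$ of columns of $C$ it retains. The hypothesis that every column of $C'$ has one $1$-entry while every row has at most one forces $\sigma|_S$ to be injective. Next, I would use condition~\ref{item:a} together with \Cref{lemma:dom1} to deduce that $\sigma$ is surjective onto $[n]$: indeed, any $n \times n$ submatrix of $C$ covering a permutation matrix must host its $n$ required $1$-entries in distinct rows, and by uniqueness of the $1$-entry per column these $1$'s are precisely prescribed by $\sigma$. From this surjectivity, for each $i \in [n] \setminus \sigma(S)$ I can choose a column $j_i \in [d] \setminus S$ with $\sigma(j_i) = i$; setting $S' \coloneqq S \cup \{j_i : i \notin \sigma(S)\}$ then yields an $n$-subset of $[d]$ on which $\sigma$ restricts to a bijection onto $[n]$, so the submatrix $C''$ of $C$ supported on the columns of $S'$ covers a permutation matrix.

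To conclude, condition~\ref{item:b} would give that the sum of the entries of any row of $C''$ is strictly less than $2$. Since $\sigma|_{S'}$ is a bijection, each row of $C''$ contains exactly one $1$-entry and, by columnwise normality combined with \Cref{lemma:dom1}, all remaining entries of that row are strictly less than~$1$. Subtracting this unique $1$-contribution bounds the sum of the non-$1$-entries of any row of $C''$ by a quantity strictly smaller than~$1$. Because $C'$ is obtained from $C''$ by deleting the columns $\{j_i : i \notin \sigma(S)\}$ and all entries of $C$ are nonnegative, the same bound transfers from each row of $C''$ to the corresponding row of $C'$, which is exactly the desired conclusion.

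The main obstacle I anticipate is the extension step, \ie, checking that the preimages $j_i$ can be selected simultaneously distinct from one another and disjoint from $S$. This is the exact place where condition~\ref{item:a} plays its role, and both properties follow from combining the surjectivity of $\sigma$ with the injectivity of $\sigma|_S$; the argument must be made explicit so that the resulting $C''$ legitimately falls within the scope of condition~\ref{item:b}.
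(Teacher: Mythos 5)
Your proof is correct and follows essentially the same route as the paper's: both arguments extend $C'$ to an $n\times n$ submatrix of $C$ covering a permutation matrix (the paper completes with columns of a fixed covering submatrix $C_0$ provided by item~\ref{item:a}, while you select preimages under the column-to-$1$-entry map $\sigma$, whose surjectivity likewise comes from item~\ref{item:a} via Lemma~\ref{lemma:dom1}) and then apply item~\ref{item:b}. Your write-up merely makes explicit the bookkeeping the paper leaves implicit, namely that the completing columns can be chosen pairwise distinct, disjoint from the columns of $C'$, and with $1$-entries in the missing rows.
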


\begin{proof}
As in the proof of Lemma~\ref{lemma:dom1}, $C_0$ refers to a submatrix of $C$ that covers a permutation matrix.

Suppose that $C'$ is the submatrix $C_{I, J} = (C_{ij})_{(i,j) \in I \times J}$. Let $n$ be the number of rows of $C$. Lemma~\ref{lemma:dom1} shows that the submatrix $C_{[n], J}$ satisfies the same conditions as $C'$, \ie, it has one $1$-entry in every column and at most one $1$-entry in each row. Besides, the cardinality of $J$ is less than or equal to $n$. Therefore, we can complete $C_{[n], J}$ with columns of $C_0$ in such a way we get an $n \times n$ submatrix of $C$ that covers a permutation matrix. The expected result follows from the fact that the sum of each row of this submatrix is less than $2$, thanks to Item~\ref{item:b} of the dominance condition.
\end{proof}

Given a matrix $C$, we denote by $C_J$ the submatrix formed by the columns of $C$ indexed by~$J$.
\begin{lemma}\label{lemma:dom3}
For every feasible basis $B$ of a system $\cla A x = \cla b, \, x \in \R_{\geq 0}^d$ that satisfies the dominance condition, the submatrix $\norm{A}_{B}$ covers a permutation matrix.
\end{lemma}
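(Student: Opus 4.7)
The plan is to argue by contradiction and exhibit a Farkas-type row combination that would refute the nonnegativity of the basic solution whenever $\norm{A}_B$ fails to cover a permutation matrix. Since normalization preserves feasible bases and the supports of basic feasible solutions, one may work directly with the system $\norm{A} x = e$, $x \in \R_{\geq 0}^d$, and set $y \coloneqq (\norm{A}_B)^{-1} e \geq 0$. By Lemma~\ref{lemma:dom1}, every column of $\norm{A}$ carries exactly one $1$-entry; denote its row by $\rho(j)$. The submatrix $\norm{A}_B$ covers a permutation matrix if and only if the restriction $\rho|_B \colon B \to [n]$ is a bijection. Assuming it is not, there exists $i_0 \in [n] \setminus \rho(B)$, and every entry of row $i_0$ of $\norm{A}_B$ lies in $[0,1)$ by the uniqueness of $1$-entries in each column.

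For each $i \in \rho(B)$, I would select a column $j_i^\star \in B$ with $\rho(j_i^\star) = i$ that maximizes $\norm{A}_{i_0, j_i^\star}$, and set $\beta_i$ equal to this maximum. The columns in $B^\star \coloneqq \{j_i^\star : i \in \rho(B)\}$ have their $1$-entries in pairwise distinct rows, so the submatrix $\norm{A}_{B^\star}$ has exactly one $1$ in every column and at most one in each row. Since $i_0 \notin \rho(B)$, row $i_0$ of $\norm{A}_{B^\star}$ contains no $1$-entry, so Lemma~\ref{lemma:dom2} yields $\sum_{i \in \rho(B)} \beta_i < 1$.

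The contradiction is then produced with the vector $v \coloneqq e_{i_0} - \sum_{i \in \rho(B)} \beta_i\, e_i$. The previous estimate gives $\trans{v} e = 1 - \sum_i \beta_i > 0$. On the other hand, for each $j \in B$, keeping only the term indexed by $\rho(j)$ in the sum below,
\[
(\trans{v} \norm{A})_j = \norm{A}_{i_0, j} - \sum_{i \in \rho(B)} \beta_i\, \norm{A}_{i, j} \leq \norm{A}_{i_0, j} - \beta_{\rho(j)} \leq 0 \, ,
\]
where the last inequality uses that $\beta_{\rho(j)}$ is the maximum of $\norm{A}_{i_0, j'}$ over columns $j' \in B$ with $\rho(j') = \rho(j)$. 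Since $y \geq 0$, this entails $\trans{v} \norm{A}_B y \leq 0$, and combining with $\norm{A}_B y = e$ gives $\trans{v} e = \trans{v} \norm{A}_B y \leq 0$, contradicting $\trans{v} e > 0$.

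The main obstacle is identifying the right set of representatives $B^\star$. A naive maximum matching of the $1$-entries of $\norm{A}_B$ does not suffice, because the inequality $\beta_{\rho(j)} \geq \norm{A}_{i_0, j}$ must hold uniformly over every $j \in B$, not only over the selected representatives. The maximizing choice of $j_i^\star$ is precisely what ensures this uniform domination, while Lemma~\ref{lemma:dom2} provides the strict inequality on $\sum_i \beta_i$ needed to close the argument.
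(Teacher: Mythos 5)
Your proof is correct, and it takes a genuinely different route from the paper's. The paper argues by induction on the number of rows: it restricts to the set $I$ of rows hit by a $1$-entry of some column of $B$, shows the restricted matrix still satisfies the dominance condition, and then sets up an auxiliary linear program maximizing $\sum_{j\in B}\norm{A}_{i_0 j}x_j$ over $\norm{A}_{I,B}x=e_I,\,x\geq 0$; the induction hypothesis applied to an optimal basis of that program, combined with Lemma~\ref{lemma:dom2}, bounds the optimum strictly below $1$, contradicting the feasibility of $B$ in the full system. You instead produce the dual certificate directly: the vector $v=e_{i_0}-\sum_{i\in\rho(B)}\beta_i e_i$ with $\trans{v}\norm{A}_B\leq 0$ and $\trans{v}e>0$ is exactly the Farkas witness that the paper's primal LP argument only exhibits implicitly through its optimal basis. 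The key choices check out: since $|B|=n$ and each column has a unique $1$-entry (Lemma~\ref{lemma:dom1}), failure to cover a permutation matrix is equivalent to $\rho|_B$ not being surjective; taking the maximizing representative $j_i^\star$ in each fiber of $\rho$ is precisely what makes the domination $\beta_{\rho(j)}\geq\norm{A}_{i_0,j}$ hold for \emph{every} $j\in B$ and not just the representatives; and Lemma~\ref{lemma:dom2} applies to $\norm{A}_{[n],B^\star}$ because the selected columns have their $1$-entries in pairwise distinct rows, none equal to $i_0$. Your argument is shorter, avoids both the induction and the auxiliary optimization problem, and uses Lemmas~\ref{lemma:dom1} and~\ref{lemma:dom2} in essentially the same places; what it gives up is the structural byproduct of the paper's proof (the explicit optimal sub-basis $B^*$ covering a permutation matrix on the restricted rows), which is not needed elsewhere.
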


\begin{proof}
It suffices to prove the statement for systems of the form $C x = e, \, x \in \R_{\geq 0}^d$ where $C$ satisfies the dominance condition. We prove it by induction on the number $n$ of rows of $C$.

The case $n = 1$ is trivial: since $C$ is columnwise normal, it is reduced to the all-$1$ row. We now suppose $n > 1$. Let $B$ be a feasible basis. Let $I \subset [n]$ be the set of rows $i$ such that there is $j \in B$ with $C_{ij} = 1$. For contradiction, suppose that $I \subsetneq [n]$, and let $i_0 \in [n] \setminus I$. We claim that the matrix $C_{I, B} = (C_{ij})_{(i,j) \in I \times B}$ satisfies the dominance condition. By construction, this matrix is columnwise normal. Besides, by Lemma~\ref{lemma:dom1}, every column of $C$ in $B$ has exactly one $1$-entry, and this entry is necessarily in the rows in $I$. Therefore, we can select for each row $i \in I$ a $1$-entry from a distinct column $j \in B$. These columns form a submatrix of $C_{I,B}$ that covers a  permutation matrix, which proves Item~\ref{item:a}. Now take any submatrix $C'$ of $C_{I, B}$ that covers a permutation matrix. It satisfies the condition of Lemma~\ref{lemma:dom2}. Thus, the sum of each of its rows is less than $2$, which proves Item~\ref{item:b}. This proves the claim.

We now consider the following problem:
\begin{equation}\label{eq:P}
\begin{array}{r@{\quad}l}
\text{Maximize} & {\displaystyle\sum_{j \in B} C_{i_0 j} x_j} \\[\jot]
\text{subject to} & C_{I, B} x = e_I \,  \\[\jot]
& x \in \R_{\geq 0}^B \, .
\end{array}
\tag{P}
\end{equation}
Problem~\eqref{eq:P} is feasible (as $B$ is a feasible basis of the original system) and bounded (each column of $C_{I,B}$ contains a $1$-entry). Thus, it admits an optimal basis $B^* \subset B$. By induction hypothesis, the submatrix $C_{I,B^*}$ covers a permutation matrix. Since every column of $C_{I, B^*}$ contains exactly one $1$-entry, we deduce that the same applies to every row of $C_{I, B^*}$. Thus, $C_{[n], B^*}$ satisfies the condition of $C'$ of Lemma~\ref{lemma:dom2}. Denoting by $x^*$ the basic solution of~\eqref{eq:P} associated with basis $B^*$, we have:
\[
\sum_{j \in B} C_{i_0 j} x^*_j = \sum_{j \in B^*} C_{i_0 j} x^*_j \leq \sum_{j \in B^*} C_{i_0 j} < 1 \, ,
\] 
where the first inequality holds because the entries of $x^*$ are less than or equal to $1$. (Indeed, every column of $C_{I, B^*}$ contains exactly one $1$-entry.) 

As a consequence, there is no point $x$ satisfying $C x = e, \, x \geq 0$ with support included in $B$. This contradicts the fact that $B$ is a feasible basis. Therefore, $I = [n]$, which means that every row of $C_B$ contains at least one $1$-entry. By Lemma~\ref{lemma:dom1}, every column of $C_B$ contains exactly one $1$-entry. Hence, $C_B$ covers a permutation matrix.
\end{proof}

\begin{proof}[\proofname{} of Proposition~\ref{prop:classical_basis}]
By assumption, for all $j \in [d]$, we have
\begin{equation}\label{eq:arg}
\arg\min_i (\tro b_i - \tro A_{ij}) = \arg\min_i (\cla b_i / \cla A_{ij}) = \arg\max_i \norm A_{ij} \, ,
\end{equation}
and every minimum is finite. 

Let $B$ be a tropical basis of $\tro A \tdot x = \tro b, \, x \in \T^d$, and $\phi$ a bijective mapping associated with $B$. By~\eqref{eq:arg}, the equality $\norm A_{i \phi(i)} = 1$ holds for all $i \in [n]$. Hence, there is a permutation matrix $P$ covered by $\norm A_B$. The matrix $P^{-1} \norm A_B$ is nonnegative and its diagonal elements are equal to $1$, thus it writes as $I + F$ where $F$ is nonnegative. Besides, the sum of each row of $F$ is less than $1$ since the sum of each row of $\norm A_B$ is less than $2$. Thus, $F$ satisfies the condition of Lemma~\ref{lemma:sub_sto}. In consequence, the system $\norm A_B x = e$ has a unique solution, and this solution has positive entries. We deduce that $B$ is a (nondegenerate) feasible basis of $\cla A x = \cla b, \, x \in \R_{\geq 0}^d$. 

Conversely, let $B$ be a feasible basis of $\cla A x = \cla b, \, x \in \R_{\geq 0}^d$. The permutation matrix covered by the submatrix $\norm A_B$, whose existence is ensured by Lemma~\ref{lemma:dom3}, provides 
a bijective mapping $\phi$ between $[n]$ and $B$ such that $\norm A_{i \phi(i)} = 1$ for all $i$. By~\eqref{eq:arg}, $B$ is a tropical basis of $\tro A \tdot x = \tro b, \, x \in \T^d$. This concludes the proof that the feasible bases of the classical system are the same as the tropical bases of the tropical system.

Let us now prove that both systems are nondegenerate. Every feasible basis of the classical system is nondegenerate because it is a tropical basis, and, as shown in the second paragraph above, any such basis is a nondegenerate feasible basis of the classical system. Every tropical basis of the tropical system is nondegenerate because every column of $\norm A$ contains exactly one $1$-entry (Lemma~\ref{lemma:dom1}) and the equalities~\eqref{eq:arg} show then that the minimum 
$\min_i (\tro b_i - \tro A_{ij})$ is uniquely attained.
\end{proof}

\begin{proposition}
Deciding whether $\cla A x = \cla b, \, x \in \R_{\geq 0}^d$ satisfies the dominance condition can be done in polynomial time.
\end{proposition}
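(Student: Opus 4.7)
The plan is to give a polynomial-time procedure that first computes the normalized matrix $\norm{A}$ by the column-wise scalings defined before the statement, and then checks conditions~\ref{item:a} and~\ref{item:b} separately. Normalization is obviously polynomial, so the real work lies in the two subsequent checks; I will reduce both to bipartite matching problems.

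My first step is to record the following reformulation: an $n \times n$ submatrix of $\norm{A}$ covers a permutation matrix if and only if there exists an injection $\pi \colon [n] \to [d]$ such that $\norm{A}_{i, \pi(i)} = 1$ for every $i \in [n]$, the chosen columns being $\pi([n])$. This identification puts candidate submatrices in bijection with certain matchings saturating $[n]$. Condition~\ref{item:a} is then equivalent to the existence of such a matching in the bipartite graph on $[n] \sqcup [d]$ whose edges are the pairs $(i, j)$ with $\norm{A}_{ij} = 1$, which is a standard bipartite matching problem solvable in polynomial time.

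For condition~\ref{item:b}, the sum of row $i$ in the submatrix indexed by $\pi([n])$ equals $\sum_{i' \in [n]} \norm{A}_{i, \pi(i')}$, so~\ref{item:b} amounts to checking, for every row $i$, that the maximum of $\sum_{i' \in [n]} \norm{A}_{i, \pi(i')}$ over all admissible $\pi$ is strictly less than $2$. For each fixed $i$, this maximum is the value of a maximum-weight perfect bipartite matching on $[n] \sqcup [d]$ in which the edge $(i', j)$ carries weight $\norm{A}_{i, j}$ and exists exactly when $\norm{A}_{i', j} = 1$; I would compute it via the Hungarian method, repeated once for each of the $n$ rows. The only point requiring care is to avoid an explicit enumeration of candidate $n \times n$ submatrices, which could be exponential in $d$; this is precisely what the maximum-weight matching reformulation sidesteps, so no genuine obstacle remains.
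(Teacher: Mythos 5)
Your argument is correct, but it takes a genuinely different route from the paper's. You treat condition~\ref{item:b} by solving, for each row $i$, a maximum-weight assignment problem over the injections $\pi$ that certify a covered permutation matrix; since the objective $\sum_{i'}\norm{A}_{i,\pi(i')}$ depends only on the image $\pi([n])$, this does compute $\max_J \sum_{j\in J}\norm{A}_{ij}$ over all admissible column sets $J$ in polynomial time. (The one imprecision: for $d>n$ you want a maximum-weight matching \emph{saturating} $[n]$, not a perfect matching of $[n]\sqcup[d]$; the Hungarian method on the rectangular cost matrix, or padding with dummy columns, handles this.) The paper proceeds differently: it first checks the necessary condition (its Lemma~\ref{lemma:dom1}) that every column of $\norm{A}$ has exactly one $1$-entry. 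Granting this, the columns partition into classes $C_k$ according to the row carrying their unique $1$-entry, an admissible $n\times n$ submatrix is precisely a choice of one column from each class, and the maximum row sum therefore decomposes coordinatewise as $\sum_{k}\max_{j\in C_k}\norm{A}_{ij}$, computable greedily with no matching machinery. So the paper trades your $n$ weighted-matching computations for a structural observation plus elementary maxima, and in passing extracts the explicit checkable criterion~\eqref{eq:dominant}; your version is computationally heavier but needs no preliminary structure lemma, since it optimizes over all admissible submatrices directly.
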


\begin{proof}
We suppose that $\cla A$ is an $n \times d$ nonnegative matrix with a nonzero entry in every column, and $\cla b$ is a positive vector. The normalized matrix $\norm A$ is a columnwise normal matrix that can be computed in polynomial time.

The first step of the algorithm is to check that every column of $\norm A$ contains precisely one entry equal to $1$. This is a necessary condition as shown by Lemma~\ref{lemma:dom1}. In the rest of the proof, we assume that this property is satisfied.

For each $i \in [n]$, we introduce the set $C_i \subset [d]$ consisting of the columns with a $1$-entry on the $i$th row. The sets $C_i$ are pairwise disjoint and cover $[d]$. The second step of the algorithm consists in checking that:
\begin{equation}
\forall i \in [n] \, , \quad C_i \neq \varnothing \quad \text{and} \quad \sum_{k = 1}^n \max_{j \in C_k} \norm A_{ij} < 2 \, . \label{eq:dominant}
\end{equation}
We claim that the dominance condition is satisfied if and only if~\eqref{eq:dominant} holds. Since it can be checked in polynomial time, this will complete the proof.

Suppose that the dominance condition is satisfied. Item~\ref{item:a} ensures that every $C_i$ is nonempty. Let $i \in [n]$. We introduce $j_k \in \arg\max_{j \in C_k} \norm A_{ij}$ for each $k \in [n]$. We consider the submatrix $\cla A'$ formed by the columns of $\norm A$ indexed by the $j_k$. Every row and every column of $\cla A'$ contain precisely one entry equal to $1$. Hence there is a (unique) permutation matrix $P$ covered by $\cla A'$. The inequality in~\eqref{eq:dominant} is satisfied by Item~\ref{item:b} because its left-hand side is equal to the sum of the $i$th row of $\cla A'$.

Conversely, suppose that~\eqref{eq:dominant} is satisfied. Item~\ref{item:a} is verified by taking a submatrix formed by one column in every $C_i$. 
We now check Item~\ref{item:b}. Let $j_1, \dots, j_n$ pairwise distinct elements of $[d]$. We introduce the $n \times n$ matrix $\cla A'$ formed by the columns of $\norm A$ indexed by the $j_k$, and we suppose that $\cla A'$ covers  some permutation matrix $P$. Let $\pi \colon [n] \to \{j_1, \dots, j_n\}$ be the bijective mapping induced by $P$, \ie, $P_{k l} = 1$ if $j_l = \pi(k)$. For each $k \in [n]$, we have $\pi(k) \in C_k$. Thus, for all $i \in [n]$,
\[
\sum_{j = 1}^n \cla A'_{i j} = \sum_{k = 1}^n \norm A_{i j_k} = \sum_{k = 1}^n \norm A_{i \pi(k)} \leq \sum_{k = 1}^n \max_{j \in C_k} \norm A_{ij} < 2 \; . \qedhere
\] 
\end{proof}

We end this section by proving Theorem~\ref{th:support}.

\begin{proof}[\proofname{} of Theorem~\ref{th:support}]
Let $\cla A \coloneqq \begin{pmatrix} I & -M \end{pmatrix}$ and $\cla b \coloneqq \cla q$. We define $\tro A \coloneqq \log \cla A$ and $\tro b \coloneqq \log \cla b$. (With this notation, the instance of~\eqref{TNECP} considered here is $\tro A \tdot \begin{psmallmatrix} w \\ z \end{psmallmatrix} = \tro b, \, \trans{w} \tdot z = -\infty$.) Thanks to the conditions~\ref{i} and~\ref{ii} (resp.~\ref{it} and~\ref{iit}) as well as the fact that the instance of~\eqref{NECP} satisfies the dominance condition, we can apply Proposition~\ref{prop:classical_basis}. This ensures that the two systems $\cla A x = \cla b, \, x \in \R_{\geq 0}^n$ and $\tro A \tdot x = \tro b, \, x \in \T^n$ have the same feasible bases and are nondegenerate. The conclusion will follow from the fact that, for both systems, all feasible solutions are basic and the support of any feasible solution is a basis. This is what we prove now.

Because of the constraints of the form $\trans{w} z = 0$, the cardinality of the support of any solution of~\eqref{NECP} is at most $n$, where $n$ is the number of rows of $M$. The system being nondegenerate, the support is actually of cardinality $n$ and is a basis.

Similarly, the constraint of the form $\trans{w} \tdot z=-\infty$ implies that the cardinality of the support of any solution of~\eqref{TNECP}  is at most $n$. As noted in Section~\ref{subsec:realization}, the nondegeneracy of the system makes that the minimum $\min_k (\tro b_k - \tro A_{kj})$ is uniquely attained. It implies that the support of any solution is of cardinality $n$ (each column in the support contributes to the satisfaction of exactly one row of the system) and shows the existence of a bijective map $\phi$ ensuring that any such support is a basis. 
\end{proof}

\section{The Lemke--Howson algorithm for the tropical Nash equilibrium complementarity problem}\label{sec:trop-LH}

\subsection{Correctness of the algorithm in the tropical setting}\label{subsec:trop_LH}

The purpose of this section is to show that the Lemke--Howson algorithm handles tropical Nash equilibrium complementarity problems, up to replacing the classical notion of bases by the tropical one.

We define the \emph{disjoint union} of two sets $S, S'$, denoted by $S \uplus S'$, as the set $(S \times \{\text{blue}\}) \cup (S' \times \{\text{red}\})$. The \emph{label} of an element $(k,\text{blue})$ or $(k,\text{red})$ of $[n] \uplus [n]$ is its first component~$k$. Two elements of $[n] \uplus [n]$ are \emph{twins} if they have the same label but distinct colors. 

The Lemke--Howson algorithm usually takes as input a nondegenerate system $w = \cla M z + \cla q, \, (w, z) \in \R^{n+n}_{\geq 0}$. The latter system writes as 
$\begin{pmatrix}
I & -M 	
\end{pmatrix} \begin{psmallmatrix} w \\ z \end{psmallmatrix} = q$, so that its bases are understood as subsets of $[n] \uplus [n]$. The algorithm first fixes an arbitrary integer $j^\star \in [n]$, and makes use of the notion of fully labeled and almost fully labeled bases. A basis $B$ is \emph{fully labeled} if all possible labels $j \in [n]$ appear in $B$. The basis $B$ is \emph{almost fully labeled} if all possible labels but one appear in $B$, and the label $j^\star$ appears with the two colors. 

The Lemke--Howson algorithm is described in Algorithm~\ref{fig:tropLH}. It performs a number of pivot operations, thus generating a sequence of feasible bases starting from $[n] \uplus \varnothing$ (which is a fully labeled feasible basis corresponding to the trivial solution $(w, z) = (q, 0)$). We observe that each basis is either fully labeled (in the first and last iterations) or almost fully labeled (in the intermediate iterations). The operation at Line~\lineref{line:pivot} is well-defined thanks to the classical analogue of Lemma~\ref{lemma:trop_pivot}: under the nondegeneracy assumption and the condition~\ref{i}, for any (classical) feasible basis $B$ and $j \notin B$, there is a unique feasible basis included in $B \cup \{j\}$ and distinct from $B$. This ensures that the algorithm terminates with a fully labeled feasible basis distinct from $[n] \uplus \varnothing$; see, e.g.,~\cite{von2002computing} for the classical proof of this fact (which we actually reproduce below in the tropical setting). The corresponding basic solution is a solution of~\eqref{NECP}. 

Similarly, we consider a nondegenerate tropical system $w \tplus M^- \tdot z = q^+, \, (w, z) \in \T^{n+n}$. As above, we can write this system under the form $A \tdot \begin{psmallmatrix} w \\ z \end{psmallmatrix} = q^+$, where we index the columns of $A$ by elements of $[n] \uplus [n]$. As noted in Section~\ref{subsec:trop_bases}, the nondegeneracy of the instance $(A, q^+)$ is equivalent to the nondegeneracy of the instance $(M^-, q^+)$ as defined in Section~\ref{sec:tnecp}, \ie, for each $j \in [n]$, the minimum $\min_i (q^+_i - M^-_{ij})$ is uniquely attained. 

Observe that the notion of (almost) fully labeled still makes sense for tropical bases. Besides, the set $[n] \uplus \varnothing$ is a (fully labeled) feasible basis of the system $w \tplus M^- \tdot z = q^+, \, (w, z) \in \T^{n+n}$. Finally, the operation done at Line~\lineref{line:pivot} in the Lemke--Howson algorithm is valid thanks to Lemma~\ref{lemma:trop_pivot} (the hypotheses of this lemma are satisfied thanks to the nondegeneracy assumption and the condition~\ref{it}). As a consequence, the Lemke--Howson algorithm applies to the tropical system $w \tplus M^- \tdot z = q^+, \, (w, z) \in \T^{n+n}$. As for classical systems, it iterates over almost fully labeled bases until it finds a fully labeled basis distinct from $[n] \uplus \varnothing$.

\begin{algorithm}
\begin{center}
\begin{algorithmic}[1]
\State $B \leftarrow [n] \uplus \varnothing$
\State $\gamma \leftarrow$ $(j^\star, \text{red})$
\Loop
	\State $B' \leftarrow$ unique basis included in $B \cup \{\gamma\}$ and distinct from $B$\label{line:pivot}
    \State\label{line:twin} $\gamma \leftarrow$ twin of the unique element in $B \setminus B'$
    \State $B \leftarrow B'$
        \sIf{$B$ is fully labeled} STOP
\EndLoop
\end{algorithmic}
\end{center}	
\caption{The Lemke--Howson algorithm} \label{fig:tropLH}
\end{algorithm}

The key property to prove the correctness and termination of the Lemke--Howson algorithm in the tropical case is Lemma~\ref{lemma:trop_pivot}. While the proof is the same as usual, we provide it for the sake of completeness. 

\begin{proposition}\label{prop:term-LH}
On nondegenerate~\eqref{TNECP} instances, the Lemke--Howson algorithm terminates after a finite number of iterations with a fully labeled basis $B$ distinct from $[n] \uplus \varnothing$.
\end{proposition}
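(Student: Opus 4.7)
The plan is to lift the classical graph-theoretic proof of termination of the Lemke--Howson algorithm to the tropical setting. First, I would introduce a graph $\Gamma$ whose vertices are the feasible tropical bases of the system $w \tplus M^- \tdot z = q^+$ that are either fully labeled or almost fully labeled, and whose edges encode the pivots allowed by the algorithm: from a fully labeled basis $B$, the unique LH-edge at $B$ adds the twin (in $[n] \uplus [n]$) of the unique element of $B$ carrying label $j^\star$; from an almost fully labeled basis $B$ missing some label $j_0 \neq j^\star$, the two LH-edges at $B$ add respectively $(j_0,\text{blue})$ and $(j_0,\text{red})$.

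The second step would be to use Lemma~\ref{lemma:trop_pivot}, together with the nondegeneracy assumption and condition~\ref{it} (which ensures that no column of the matrix $A = \begin{pmatrix} I & M^- \end{pmatrix}$ is the $-\infty$ vector), to show that these pivots are all well-defined and uniquely determined. A short case analysis on the label of the leaving element would then establish that the resulting basis $B'$ again lies in $\Gamma$, that each fully labeled vertex of $\Gamma$ has degree exactly $1$, and that each almost fully labeled vertex has degree exactly $2$. I would also check the symmetry of the edge relation: if the LH-pivot from $B$ with incoming element $\gamma$ yields $B' = B \cup \{\gamma\} \setminus \{\delta\}$, then applying the uniqueness clause of Lemma~\ref{lemma:trop_pivot} to the pair $(B, \gamma)$ shows that the LH-pivot from $B'$ with incoming element $\delta$ returns to $B$.

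With these properties established, the conclusion becomes a finite combinatorial argument. Since $\Gamma$ is finite and of maximum degree $2$, each of its connected components is a simple path or a cycle. The initial basis $[n] \uplus \varnothing$ is fully labeled, hence of degree $1$, and therefore an endpoint of a simple path. The algorithm walks along this path: the update on Line~\lineref{line:twin} sets the next incoming element $\gamma$ to the twin of the just-removed element, which is precisely the unique LH-edge at the current vertex distinct from the edge used to enter it. Thus no edge is ever repeated, and the algorithm reaches the other endpoint of the path in finitely many iterations; that endpoint is then a fully labeled basis distinct from $[n] \uplus \varnothing$.

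The main subtlety I expect is the degree count at almost fully labeled vertices: one must verify that both pivots available from such a vertex really produce distinct vertices of $\Gamma$, and neither coincides with the vertex itself. This is where the nondegeneracy hypothesis, condition~\ref{it}, and the uniqueness clause of Lemma~\ref{lemma:trop_pivot} come together. Once this is checked, everything else reduces to a transparent path-following argument, entirely analogous to the classical case.
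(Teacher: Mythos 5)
Your proposal is correct and rests on exactly the same ingredients as the paper's proof: the unique-pivot property of Lemma~\ref{lemma:trop_pivot} under nondegeneracy and condition~\ref{it}, and the resulting degree counts (one adjacent basis at a fully labeled basis, two at an almost fully labeled one, which are the paper's Lemmas~\ref{lemma:lh2} and~\ref{lemma:lh1}). The only difference is presentational: you phrase the conclusion as a walk along a path in a max-degree-$2$ graph, whereas the paper derives a contradiction from the earliest revisited basis; both are the standard Lemke--Howson parity argument, and your extra claim that \emph{every} fully labeled basis has degree one (not just $[n]\uplus\varnothing$) is a routine strengthening that your framing needs and that indeed holds.
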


The basic point $(w,z)$ associated with a basis $B$ is given by
\[
w_i = \begin{cases}
q^+_i & \text{if} \; (i, \text{blue}) \in B \, , \\
-\infty & \text{otherwise,}
\end{cases}
\qquad \qquad 
z_j = \begin{cases}
 \min_i (q^+_i - M^-_{ij}) & \text{if} \; (j, \text{red}) \in B \, , \\
-\infty & \text{otherwise.}
\end{cases} 
\]
When $B$ is fully labeled and distinct from $[n] \uplus \varnothing$, we have further $\trans{w} \tdot z=-\infty$ and $z \neq - \infty$. Therefore, the basic point associated with a fully labeled basis $B$ distinct from $[n] \uplus \varnothing$ is a solution of~\eqref{TNECP}. 

We prove now Proposition~\ref{prop:term-LH}. Two bases $B$ and $B'$ are \emph{adjacent} if $B \diffsym B'$ has cardinality two, all labels appear in $B \cup B'$, and the label $j^\star$ appears in $B \cup B'$ with the two colors. Note that the Lemke--Howson algorithm always moves from a basis to an adjacent one.

\begin{lemma}\label{lemma:lh2}
The fully labeled basis $[n] \uplus \varnothing$ is adjacent to exactly one basis.
\end{lemma}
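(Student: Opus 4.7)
The plan is to unpack the definition of adjacency for the special basis $B_0 \coloneqq [n] \uplus \varnothing$ and then apply Lemma~\ref{lemma:trop_pivot} to pin down the unique adjacent basis. First, I observe that $B_0$ already contains every label in blue color, so the adjacency requirement that all labels appear in $B_0 \cup B'$ is automatic. The remaining conditions are that $|B_0 \diffsym B'| = 2$ and that the label $j^\star$ appear in $B_0 \cup B'$ with both colors. Since $(j^\star,\text{red}) \notin B_0$, the latter forces $(j^\star,\text{red}) \in B'$. Combined with $|B'| = |B_0| = n$ and the constraint on the size of the symmetric difference, this forces $B'$ to have the form $(B_0 \setminus \{e\}) \cup \{(j^\star,\text{red})\}$ for some $e \in B_0$; in particular $B' \subset B_0 \cup \{(j^\star,\text{red})\}$ and $B' \neq B_0$.

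Next, I invoke Lemma~\ref{lemma:trop_pivot} applied to the coefficient matrix $A = \begin{pmatrix} I & M^- \end{pmatrix}$ of the tropical system $A \tdot \begin{psmallmatrix} w \\ z \end{psmallmatrix} = q^+$, and to the index $(j^\star,\text{red})$. The corresponding column of $A$ is the $j^\star$-th column of $M^-$, which is not the $-\infty$ vector by condition~\ref{it}. Together with the nondegeneracy assumption, the ``moreover'' part of Lemma~\ref{lemma:trop_pivot} ensures that there is exactly one basis $B'$ included in $B_0 \cup \{(j^\star,\text{red})\}$ and distinct from $B_0$. Conversely, any such $B'$ automatically satisfies the three adjacency conditions, as observed in the previous paragraph. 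Hence $B_0$ is adjacent to exactly one basis.

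There is no real obstacle: the only subtle point is verifying the hypotheses of Lemma~\ref{lemma:trop_pivot}, which reduces to condition~\ref{it} on $M^-$ and to the equivalence between nondegeneracy of $(M^-,q^+)$ and of $(A,q^+)$ recorded in Section~\ref{subsec:trop_bases}. The rest is a direct bookkeeping step based on the definition of adjacency.
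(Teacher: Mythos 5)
Your argument is correct and follows the same route as the paper: adjacency forces $(j^\star,\text{red})\in B'$ and hence $B'\subset B_0\cup\{(j^\star,\text{red})\}$ with $B'\neq B_0$, and the uniqueness clause of Lemma~\ref{lemma:trop_pivot} (applicable since the $j^\star$-th column of $M^-$ is not the $-\infty$ vector and the instance is nondegenerate) yields exactly one such basis. The paper's proof is just a terser version of the same bookkeeping plus the same appeal to Lemma~\ref{lemma:trop_pivot}.
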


\begin{proof}
Let $B \coloneqq [n] \uplus \varnothing$, and $B'$ an adjacent basis. The latter necessarily contains $(j^\star, \text{red})$. By Lemma~\ref{lemma:trop_pivot}, it is the unique basis included in $B \cup \{(j^\star, \text{red})\}$ other than $B$.
\end{proof}

\begin{lemma}\label{lemma:lh1}
Every almost fully labeled basis is adjacent to exactly two bases.
\end{lemma}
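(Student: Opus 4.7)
The plan is to show that an almost fully labeled basis $B$ has exactly two adjacent bases by matching them to the two possible ways of pivoting in the ``missing'' label.

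First, I will unpack the definitions. Since $B$ is almost fully labeled, it has size $n$, it contains $(j^\star, \text{blue})$ and $(j^\star, \text{red})$, and it is missing exactly one label, call it $j^{\star\star} \in [n] \setminus \{j^\star\}$. Any basis $B'$ adjacent to $B$ satisfies $|B \diffsym B'| = 2$ and $|B| = |B'| = n$, so $B' = (B \setminus \{\gamma\}) \cup \{\gamma'\}$ for some $\gamma \in B$ and $\gamma' \in ([n] \uplus [n]) \setminus B$. Since $B$ already has all labels except $j^{\star\star}$, the only way $B \cup B' = B \cup \{\gamma'\}$ can contain all labels in $[n]$ is for $\gamma'$ to have label $j^{\star\star}$. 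The condition that $j^\star$ appears with both colors in $B \cup B'$ is then automatic because $B$ itself already satisfies it.

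Next, I will apply Lemma~\ref{lemma:trop_pivot} to each of the two candidate choices $\gamma' = (j^{\star\star}, \text{blue})$ and $\gamma' = (j^{\star\star}, \text{red})$. Both correspond to columns of the matrix $A = \begin{pmatrix} I & M^- \end{pmatrix}$ that are not the $-\infty$ vector: the blue column is a column of the identity, and the red column is not $-\infty$ by condition~\ref{it}. Thus for each of the two choices of $\gamma'$, Lemma~\ref{lemma:trop_pivot} (in the nondegenerate case) yields exactly one basis $B' \subset B \cup \{\gamma'\}$ distinct from $B$. Since $B' \neq B$ and $|B'| = |B|$, we necessarily have $\gamma' \in B'$, and the two resulting bases are distinct because they contain $(j^{\star\star}, \text{blue})$ and $(j^{\star\star}, \text{red})$ respectively.

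Finally, I will check that each of these two bases $B'$ is genuinely adjacent to $B$. By construction $|B \diffsym B'| = 2$, and $B \cup B'= B \cup \{\gamma'\}$ contains all labels (since the only missing one, $j^{\star\star}$, is exactly the label of $\gamma'$) as well as both colors of $j^\star$ (inherited from $B$). Combined with the previous paragraph, this shows that $B$ has exactly two adjacent bases, obtained by pivoting in $(j^{\star\star}, \text{blue})$ and $(j^{\star\star}, \text{red})$. There is no real obstacle here: the statement reduces to observing that ``adjacency'' forces the pivoted-in element to carry the missing label, after which the uniqueness half of Lemma~\ref{lemma:trop_pivot} does the rest.
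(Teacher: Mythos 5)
Your proof is correct and follows essentially the same route as the paper's: adjacency forces the entering element to carry the unique missing label, and then the uniqueness part of Lemma~\ref{lemma:trop_pivot} (under nondegeneracy) gives exactly one new basis for each of the two colors of that label. You spell out a few details the paper leaves implicit (that the two candidate columns are not the $-\infty$ vector, and that the two resulting bases are distinct), but the argument is the same.
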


\begin{proof}
Let $B$ be an almost fully labeled basis. Let $j$ be the missing label in $B$. This label must appear in any adjacent basis to $B$. Thus, any such basis is contained either in $B \cup \{(j, \text{blue})\}$ or $B \cup \{(j, \text{red})\}$. By Lemma~\ref{lemma:trop_pivot} and the nondegeneracy assumption (which applies thanks to the condition~\ref{it}), there are exactly two such bases not equal to $B$.
\end{proof}

\begin{proof}[Proof of Proposition~\ref{prop:term-LH}]
Assume for a contradiction that the algorithm visits a basis at least twice. Let $B'$ be the basis whose second occurrence along the execution of the algorithm is the earliest, and let $B$ be the basis from which $B'$ is reached for the second time. The bases $B$ and $B'$ are adjacent. Since $B$ has only been visited once yet, Lemma~\ref{lemma:lh2} shows that $B'$ is not the fully labeled basis $[n] \uplus \varnothing$. It is not another fully labeled basis since otherwise the algorithm would have stopped at its first occurrence. Let now $B''$ be the basis visited by the algorithm after the second visit of $B'$. The bases $B'$ and $B''$ are adjacent. Besides, $B$ and $B''$ are distinct, because $B''$ contains the twin of an element of $B$, and this element is taken form $B \setminus B'$ (as $B'$ is almost fully labeled, the label of this element cannot be $j^\star$). Since $B'$ is an almost fully labeled basis, we get a contradiction with Lemma~\ref{lemma:lh1}: in addition to $B$ and $B''$, the basis from which $B'$ was reached for the first time is a third basis adjacent to $B'$.

Since every basis is visited at most once, the algorithm terminates. Furthermore, the final basis is fully labeled and distinct from the initial one, otherwise this latter would be visited twice.
\end{proof}

\begin{remark}\label{remark:nondegenerate}
As in the classical setting, the Lemke--Howson algorithm applies to nondegenerate instances of~\eqref{TNECP}. The general case of~\eqref{TNECP} can be reduced to this case by perturbing the entries of each column of the matrix $M^-$ so that the minimum $\min_i (q^+_i - M^-_{ij})$ is attained by a unique term for each $j \in [n]$. We point out that this perturbation can be done symbolically rather than numerically, \ie, we can (arbitrarily) select one term $q^+_i - M^-_{ij}$ among the ones reaching the minimum for each $j$. This precisely amounts to removing extra red edges in the graph $G$ introduced in Section~\ref{subsec:tncep}, as we do in the first part of the proof of Theorem~\ref{thm:tnecp}.
\end{remark}

\subsection{Complexity results}\label{subsec:classical_LH}

First we prove Theorem~\ref{th:complexity} by providing a linear bound on the length of the sequences of bases visited by the Lemke--Howson algorithm when applied to a nondegenerate instance of~\eqref{TNECP}.

\begin{proof}[\proofname\ of Theorem~\ref{th:complexity}]
In order to match the notation of Section~\ref{subsec:trop_bases}, we 
write the system $w \tplus M^- \tdot z = q^+$ as $A \tdot \begin{psmallmatrix} w \\ z \end{psmallmatrix} = b$. Recall that the instance $(A, b)$ is nondegenerate. 

Let $B_1, \dots, B_p$ be the sequence of bases visited by the  Lemke--Howson algorithm, where $B_1 = [n] \uplus \varnothing$ and $B_p$ is another fully labeled basis. If $B$ and $B'$ are two successive bases, $B'$ is obtained by adding an element $\gamma \notin B$ to $B$ and removing an element $\delta \in B$. They are respectively the \emph{entering} and \emph{leaving} columns. Moreover, the minima $\min_i (b_i - A_{i\gamma})$ and $\min_i (b_i - A_{i\delta})$ are both uniquely attained at the same element $i \in [n]$. We denote by $i_1, \dots, i_{p-1}$ the sequence of these elements. 

By contradiction, we suppose $p > 2n-1$. We introduce the smallest integer $l$ such that $i_l = i_k$ for some $k < l \leq n$. We now prove the following claim by induction on $0 \leq s \leq k-1$:
\begin{enumerate}[label=(C\arabic*)]
\item\label{item:c} the basis $B_{l+s}$ contains the columns that have entered at iterations $1, \dots, k-s$; 
\item \label{item:d} the leaving column at iteration $l+s$ is the entering column at iteration $k-s$.
\end{enumerate}

We start with $s = 0$. Since $i_1, \dots, i_{l-1}$ are pairwise distinct, every column entered at an iteration less than $l$ is in the basis $B_l$. The leaving column at iteration $l$ is uniquely determined as the element $\gamma \in B_l$ such that the minimum $\min_i (b_i - A_{i\gamma})$ is attained at $i = i_l$. Since $i_l = i_k$, the column $\gamma$ is the one that has entered at iteration $k$. 

We suppose that the claim holds for $s \geq 0$. The basis $B_{l+s+1}$ contains the elements of $B_{l+s}$ except the column which has entered at iteration $k-s$ (thanks to Item~\ref{item:d} of the induction hypothesis). By Item~\ref{item:c} of the induction hypothesis, the basis $B_{l+s+1}$ contains all the columns entered at iterations $1, \dots, k-s-1$. This proves Item~\ref{item:c} at $s+1$. Moreover, the entering column at iteration $l+s+1$ is equal to the leaving column at iteration $k-s-1$, because both are the twins of the leaving column at iteration $l+s$ (which is also the entering column at iteration $k-s$ by Item~\ref{item:d} of the induction hypothesis); see Line~\lineref{line:twin} in Algorithm~\ref{fig:tropLH}. We deduce that $i_{l+s+1} = i_{k-s-1}$. The leaving column at iteration $l+s+1$ is uniquely determined as the element $\gamma \in B_{l+s+1}$ such that the minimum $\min_i (b_i - A_{i\gamma})$ is attained at $i = i_{k-s-1}$. This is precisely the column that has entered at iteration $k-s-1$, thanks to Item~\ref{item:c} at $s+1$. This completes the proof of the claim.

As a consequence, the leaving column at iteration $l+k-1$ is equal to $(j^\star, \text{red})$, so that the basis $B_{l+k}$ is fully labeled. Therefore, $p = l+k \leq 2n-1$, which is a contradiction.
\end{proof}

\begin{remark}
When applied to a nondegenerate instance of~\eqref{TNECP}, the Lemke--Howson algorithm actually implements a way to find a cycle in the graph $G$ involved in the proof of Theorem~\ref{thm:tnecp}. Indeed, in the case where $(M^-,q^+)$ is nondegenerate, every column node of the graph $G$ introduced in Section~\ref{subsec:tncep} is incident to exactly one blue edge and one red edge. In this setting, every basis $B \subset [n] \uplus [n]$ can be equivalently thought of as a subset of edges of the graph $G$ in which every row node has degree $1$; this subset consists of the blue edges $u_i v_i$ for all $(i, \text{blue}) \in B$, and the red edges incident to the nodes $v_j$ for all $(j, \text{red}) \in B$. In this way, fully labeled bases correspond to perfect matchings. The entering and leaving columns at every iteration of the Lemke--Howson algorithm correspond to entering and leaving edges incident to a same row node. Moreover, the sequence of such row nodes corresponds to the elements $i_1, \dots, i_{p-1}$ in the proof of Theorem~\ref{th:complexity}. We illustrate in Figure~\ref{fig:trace-bis} the two possible cases, depending on whether $i_1, \dots, i_{p-1}$ are pairwise distinct or not (the latter case happens when 
$p \geq n+2$). In both situations, we remark that the Lemke--Howson algorithm amounts to finding a cycle $C$ in the graph $G$ and taking the symmetric difference with the initial perfect matching, like in the proof of Theorem~\ref{thm:tnecp}.
\end{remark}

\begin{figure}
\begin{center}
\begin{tikzpicture}
\begin{scope}[shift={(2,0)}]
\node at (-5,0) {(a)};
\foreach \a in {0,1,...,9}{
\coordinate (v\a) at (\a*360/10: 1.5cm);
\coordinate (l\a) at (\a*360/10: 1.9cm);
}
\draw[red,ultra thick] (v1) -- (v2) (v3) -- (v4) (v5) -- (v6) (v7) -- (v8) (v9) -- (v0);
\draw[blue,very thick,dashed] (v0) -- (v1) (v2)  -- (v3) (v4) -- (v5) (v6) -- (v7) (v8) -- (v9);
\foreach \a in {0,1,...,9}{
\filldraw (v\a) circle (1pt);
}
\node[anchor=east,inner sep=-4pt] at (l4) {$v_{j^\star}$};
\node at (l3) {$u_{i_1}$};
\node at (l1) {$u_{i_2}$};
\node at (l9) {$u_{i_3}$};
\node at (l7) {$u_{i_4}$};
\node at (l5) {$u_{i_5}$};
\end{scope}
\begin{scope}[shift={(0,-4.5)}]
\node at (-5.5,0) {(b)};
\foreach \a in {0,1,...,9}{
\coordinate (v\a) at (\a*360/10: 1.5cm);
\coordinate (l\a) at (\a*360/10: 1.9cm);
}
\path (v5) ++ (-0.92,0) coordinate (w2) ++ (-0.92,0) coordinate (w1) ++ (-0.92,0) coordinate (w0);

\draw[red,ultra thick] (w0) -- (w1) (w2) -- (v5);
\draw[blue,very thick,dashed] (w1) -- (w2);
\draw[red,ultra thick] (v1) -- (v2) (v3) -- (v4) (v7) -- (v8) (v9) -- (v0);
\draw[blue,very thick,dashed] (v0) -- (v1) (v2)  -- (v3) (v4) -- (v5) (v6) -- (v7) (v8) -- (v9);

\foreach \a in {0,1,...,9}{
\filldraw (v\a) circle (1pt);
}
\filldraw (w0) circle (1pt);
\filldraw (w1) circle (1pt);
\filldraw (w2) circle (1pt);
\node[above=0.25cm,anchor=base] at (w0) {$v_{j^\star}$};
\node[right] at (v5) {$u_{i_k} = u_{i_l}$};
\end{scope}
\begin{scope}[shift={(7,-4.5)}]
\foreach \a in {0,1,...,9}{
\coordinate (v\a) at (\a*360/10: 1.5cm);
\coordinate (l\a) at (\a*360/10: 1.9cm);
}
\path (v5) ++ (-0.92,0) coordinate (w2) ++ (-0.92,0) coordinate (w1) ++ (-0.92,0) coordinate (w0);

\draw[blue,ultra thick] (w1) -- (w2);
\draw[red,very thick,dashed] (w0) -- (w1) (w2) -- (v5);
\draw[red,ultra thick] (v1) -- (v2) (v3) -- (v4) (v5) -- (v6) (v7) -- (v8) (v9) -- (v0);
\draw[blue,very thick,dashed] (v0) -- (v1) (v2)  -- (v3) (v4) -- (v5) (v6) -- (v7) (v8) -- (v9);

\foreach \a in {0,1,...,9}{
\filldraw (v\a) circle (1pt);
}
\filldraw (w0) circle (1pt);
\filldraw (w1) circle (1pt);
\filldraw (w2) circle (1pt);
\node[above=0.25cm,anchor=base] at (w0) {$v_{j^\star}$};
\node[right] at (v5) {$u_{i_k} = u_{i_l}$};
\end{scope}
\end{tikzpicture}	
\end{center}
\caption{An interpretation of the sequence of iterations of the Lemke--Howson algorithm on the graph $G$ of Section~\ref{subsec:tncep}. Solid and dashed line segments respectively correspond to added and removed edges. The top part (a) illustrates the case where the visited row nodes are pairwise distinct. The bottom part (b) is the case where some row nodes are visited twice. Using the notation of the proof of Theorem~\ref{th:complexity}, the left-hand side corresponds to the state just before cycling on the row node $u_{i_l} = u_{i_k}$, while the right-hand side provides the final state.}\label{fig:trace-bis}
\end{figure}
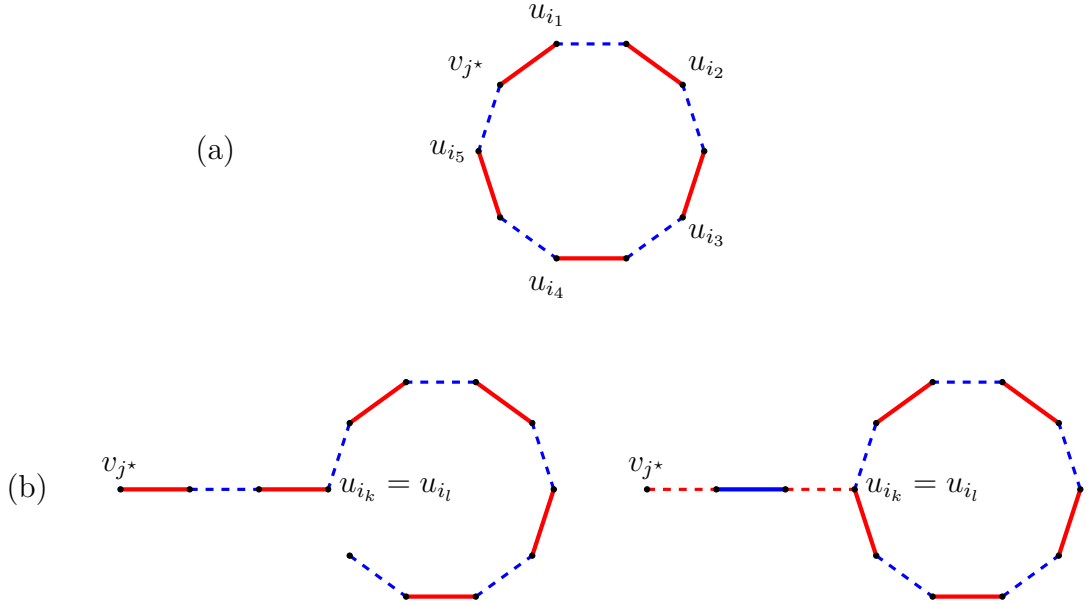

We finally relate the execution trace of the Lemke--Howson algorithm when it is applied to an instance of~\eqref{NECP} that satisfies the dominance condition, and to its logarithmic image. Proposition~\ref{prop:classical_basis} ensures that if $w = \cla M z + q, \, (w, z) \in \R_{\geq 0}^{n + n}$ satisfies the dominance condition, then this system as well as its logarithmic image are  nondegenerate. Therefore, the Lemke--Howson algorithm can be applied to both instances.
\begin{theorem}\label{th:classical_LH}
Consider an instance of~\eqref{NECP} that satisfies the dominance condition, and the instance of~\eqref{TNECP} given by its logarithmic image. The sequence of bases over which the Lemke--Howson algorithm iterates does not depend on whether it is applied to the classical instance or the tropical one. In particular, it returns a basis of a solution of the classical instance within at most $2n-1$ iterations. 
\end{theorem}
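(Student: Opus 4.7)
The plan is to prove the theorem by induction over the iterations of the Lemke--Howson algorithm, showing that at every step both executions maintain the same current basis and the same entering column, and stop at the same iteration. The key algebraic input is Proposition~\ref{prop:classical_basis} applied to $\cla A = \begin{pmatrix} I & -\cla M \end{pmatrix}$, $\cla b = \cla q$ and to their logarithmic images $\tro A = \log \cla A$, $\tro b = \log \cla b$: it ensures that the classical system $\cla A x = \cla b,\, x \in \R^{2n}_{\geq 0}$ and the tropical system $\tro A \tdot x = \tro b,\, x \in \T^{2n}$ have exactly the same set of feasible bases as subsets of $[n] \uplus [n]$, and that both are nondegenerate. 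In particular, both executions of Algorithm~\ref{fig:tropLH} begin with the common basis $B_1 \coloneqq [n] \uplus \varnothing$ and the common entering column $(j^\star, \text{red})$.

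For the inductive step, assume that after $k$ iterations both executions have generated the same sequence $B_1, \dots, B_k$ of feasible bases with the same entering columns $\gamma_1, \dots, \gamma_k$. At iteration $k+1$, each execution selects the unique feasible basis contained in $B_k \cup \{\gamma_k\}$ and distinct from $B_k$: uniqueness in the tropical case is provided by Lemma~\ref{lemma:trop_pivot}, and in the classical case by its classical analogue recalled before Proposition~\ref{prop:term-LH}, both applicable thanks to the nondegeneracy of the two systems and to conditions~\ref{i} and~\ref{it}. Since the feasible bases coincide as sets, the pivoted basis $B_{k+1}$ is literally the same element of the shared basis family in both executions. The next entering column $\gamma_{k+1}$ is then defined at Line~\lineref{line:twin} as the twin of the unique element of $B_k \setminus B_{k+1}$, a quantity that depends only on the pair $(B_k, B_{k+1})$, and therefore also agrees between the two executions.

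Finally, the termination test ``$B$ is fully labeled'' depends solely on the basis $B$ as a subset of $[n] \uplus [n]$, not on any numerical value. Hence both executions halt simultaneously on the same fully labeled basis $B_p \neq [n] \uplus \varnothing$. Theorem~\ref{th:complexity} applied to the tropical execution bounds $p$ by $2n-1$, and this bound transfers verbatim to the classical execution; the associated basic solution is a solution of~\eqref{NECP} by the correctness of the classical Lemke--Howson algorithm.

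There is no deep difficulty here: the only thing to verify is that every decision made at each step of the algorithm (entering column, pivoted basis, and termination) can be expressed purely in terms of the shared set of feasible bases, with no reference to the numerical data of either system. The mild point deserving attention is that the classical analogue of Lemma~\ref{lemma:trop_pivot} must apply, and this is guaranteed by the nondegeneracy part of Proposition~\ref{prop:classical_basis} combined with condition~\ref{i}.
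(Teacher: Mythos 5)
Your proposal is correct and follows essentially the same route as the paper: both arguments proceed by induction on the iterations, use Proposition~\ref{prop:classical_basis} to identify the classical and tropical feasible bases (and their nondegeneracy), invoke Lemma~\ref{lemma:trop_pivot} and its classical analogue to match the pivot steps, and conclude with Theorem~\ref{th:complexity} for the $2n-1$ bound. The only cosmetic difference is that the paper needs uniqueness of the pivoted basis only on the tropical side (it shows the classical successor is a tropical basis in $B \cup \{\gamma\}$ and appeals to tropical uniqueness), whereas you invoke uniqueness on both sides, which is equally valid.
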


\begin{proof}
The Lemke--Howson algorithm starts from the same basis in both instances. Suppose that the two executions are at basis $B$, and that the same entering column $\gamma$ is selected. Let $B'$ be the next classical feasible basis visited. Then $B$ and $B'$ are adjacent and both are nondegenerate tropical feasible bases by Proposition~\ref{prop:classical_basis}. 
The basis $B'$ is the next tropical feasible basis visited by the Lemke--Howson algorithm thanks to Lemma~\ref{lemma:trop_pivot}, because the latter is a tropical basis (Proposition~\ref{prop:classical_basis}) included in $B \cup \{\gamma\}$. This implies that the leaving column is the same in both executions. Besides, the next entering column (if any) is defined as the twin of the latter in both executions.

Therefore, by Theorem~\ref{th:complexity}, the Lemke--Howson algorithm terminates in at most $2n-1$ iterations with a fully labeled basis that contains a red-colored element. 
\end{proof}

\section{Tropical Nash equilibria}\label{sec:tropical_nash}

Our original motivation to study the tropical Nash equilibrium complementarity problem was that the  corresponding classical problem contains the computation of Nash equilibria for bimatrix games. We provide more details on this relation, and discuss to which extent this can give rise to notions of tropical Nash equilibria.

Given two matrices $\nashA, \nashB \in \R^{r \times s}$, a \emph{bimatrix game} is a game in which two players play simultaneously by choosing actions $i \in [r]$ and $j \in [s]$ respectively, in which case their payoffs are respectively $\nashA_{ij}$ and $\nashB_{ij}$. \emph{Mixed strategies} of the two players are defined as probability measures over the set of their actions. They are represented by vectors $x \in \Delta_{r-1}$ and $y \in \Delta_{s-1}$, where $\Delta_p \coloneqq  \{ z \in \R_{\geq 0}^{p+1} \colon \sum_{k = 1}^{p+1} z_k = 1 \}$ is the standard $p$-dimensional simplex. A \emph{Nash equilibrium} of the bimatrix game is a pair of mixed strategies $(x^*, y^*) \in \Delta_{r-1} \times \Delta_{s-1}$ verifying 
\begin{equation}\label{eq:nash}
\begin{aligned}
\trans{x^*} \cla \nashA y^* & \geq \trans{x} \nashA y^* \, , \\
\trans{x^*} \cla \nashB y^* & \geq \trans{x^*} \nashB y \, ,
\end{aligned}
\end{equation}
for all $(x,y) \in \Delta_{r-1} \times \Delta_{s-1}$. This means that the mixed strategy $x^*$ of Player 1 is a best response to the strategy $y^*$ of Player 2, and vice versa. An equivalent alternative definition of Nash equilibria is the following: a Nash equilibrium is a pair $(x^*, y^*) \in \Delta_{r-1} \times \Delta_{s-1}$ verifying:
\begin{equation}\label{eq:pre_compl}
\begin{aligned}
\forall i \in [r] \, , && x^*_i > 0 &\implies \forall k \in [r] \, , \; (\nashA y^*)_i \geq (\nashA y^*)_k \, , \\
\forall j \in [s] \, , && y^*_j > 0 &\implies \forall l \in [s] \, , \; (\trans{\nashB} x^*)_j \geq (\trans{\nashB} x^*)_l \, .
\end{aligned}
\end{equation}
Observe that, under any of these two definitions, the set of Nash equilibria remains the same when we shift all the entries of $\nashA$ and $\nashB$ by a constant. Therefore, we can assume that $\nashA$ and $\nashB$ are nonnegative (entrywise), and every column of $\nashA$ and $\trans{\nashB}$ has at least one positive entry. Then, using homogenization and introducing slack variables, it can be shown that Nash equilibria are in one-to-one correspondence with the solutions of~\eqref{NECP} where $\cla M = -\begin{psmallmatrix}
0 & \nashA \\
\trans{\nashB} & 0 	
\end{psmallmatrix}$ and $q$ is the all-$1$ vector of size $r+s$~\cite{stengel_2007}. We are now ready to prove \Cref{prop:spec-poly}.

\begin{proof}[\proofname{} of Proposition~\ref{prop:spec-poly}]
Let $A \coloneqq \begin{pmatrix} I & -M \end{pmatrix}$. By Theorems~\ref{thm:tnecp} and~\ref{th:support}, it suffices to check that the system $A \begin{psmallmatrix} w \\ z \end{psmallmatrix} = q$ satisfies the dominance condition. 

The matrix $\norm{A}$ writes as $\begin{pmatrix} I & \begin{smallmatrix} 0 & \nashA' \\ \trans{\nashB'} & 0 \end{smallmatrix} \end{pmatrix}$, where the nonzero entries of $\nashA'$ (resp.~$\nashB'$) distinct from $1$ are less than $1/(r-1)$ (resp.~$1/(s-1)$). Item~\ref{item:a} is satisfied by considering the identity block in $\norm A$. For Item~\ref{item:b}, consider a $(r+s) \times (r+s)$-submatrix $M'$ dominating a permutation matrix. As each column of $\norm A$ has exactly one $1$-entry, the matrix $M'$ must have exactly one $1$-entry on each row: more than two $1$-entries in a row of $M'$ would leave another row without any $1$-entry (which is impossible as $M'$ dominates a permutation matrix). Moreover, the matrix $M'$ has at most $r$ columns taken from the block $\begin{psmallmatrix} \nashA' \\ 0 \end{psmallmatrix}$, and even at most $r-1$ such columns if $M'$ contains a column from the first $r$ columns of the identity block of $\norm A$. Thus, for all $i \in [r]$, the $i$th row of $M'$ has exactly one $1$-entry, at most $r-1$ entries of $P'$ less than $1$, and $0$-entries. Therefore, the sum of such row is less than $2$. The argument is analogous for rows of $M'$ indexed by some $i$ where $r+1 \leq i \leq r+s$. 
\end{proof}

We now define tropical Nash equilibria. Let $\Delta^\trop_p \coloneqq \{ z \in \T^{p+1} \colon \tsum_{k = 1}^{p+1} z_k = 0 \}$ be the tropical standard simplex. Given $\nashA, \nashB \in \T^{r \times s}$, a \emph{tropical Nash equilibrium} is a pair $(x^*,y^*) \in \Delta^\trop_{r-1} \times \Delta^\trop_{s-1}$ such that
\begin{equation}\label{eq:trop_pre_compl}
\begin{aligned}
\forall i \in [r] \, , && x^*_i > -\infty &\implies \forall k \in [r] \, , \;(\nashA \tdot y^*)_i \geq (\nashA \tdot y^*)_k \, , \\
\forall j \in [s] \, , && y^*_j > -\infty & \implies \forall l \in [s] \, , \; (\trans{\nashB} \tdot x^*)_j \geq (\trans{\nashB} \tdot x^*)_l \, .
\end{aligned}
\end{equation}
The next proposition shows that the correspondence between the Nash equilibria in the sense of~\eqref{eq:pre_compl} and the solutions of~\eqref{NECP} remains valid in the tropical setting. The proof is analogous to the classical case sketched above. The correspondence is built via a normalization we define now. 
For $x \in \T^r$ and $y \in \T^s$ having each at least one entry distinct from $-\infty$, the \emph{normalization} of $z=(x,y)$ is the pair of scaled elements $(x^*,y^*) \in \Delta^\trop_{r-1} \times \Delta^\trop_{s-1}$ where $x^* = \alpha \tdot x$ and $y^* = \beta \tdot y$ for some $\alpha, \beta \in \T$.

\begin{proposition}\label{prop:correspondence}
Suppose that every column of $\nashA$ and $\trans{\nashB}$ has at least one entry distinct from $-\infty$, and consider~\eqref{TNECP} with $M^- = \begin{psmallmatrix}
-\infty & \nashA \\
\trans{\nashB} & -\infty
\end{psmallmatrix}$ and $q^+$ the all-$0$ vector of size $r+s$. 
Then, the normalization maps bijectively the projection $z$ of the solutions of~\eqref{TNECP} to the pairs $(x^*, y^*) \in 	\Delta^\trop_{r-1} \times \Delta^\trop_{s-1}$ that satisfy~\eqref{eq:trop_pre_compl}.
\end{proposition}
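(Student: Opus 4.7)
The plan is to unfold~\eqref{TNECP} in the special form given and translate each direction of the claimed bijection directly. Writing $z = (x, y)$ with $x \in \T^r$ and $y \in \T^s$, the system becomes $w_i \tplus (\nashA \tdot y)_i = 0$ for $i \in [r]$ and $w_{r+j} \tplus (\trans{\nashB} \tdot x)_j = 0$ for $j \in [s]$, together with complementarity $w_i \tdot x_i = w_{r+j} \tdot y_j = -\infty$. First I would observe that the constraint $z \neq -\infty$ in fact forces \emph{both} $x$ and $y$ to have at least one finite entry: if $y$ were identically $-\infty$, then $(\nashA \tdot y)_i = -\infty$ would force $w_i = 0$, and complementarity would give $x = -\infty$, contradicting $z \neq -\infty$. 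Hence the normalization $(x^*, y^*) \in \Delta^\trop_{r-1} \times \Delta^\trop_{s-1}$ of $z$ is well defined.

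Next I would verify that this normalization lands in the set of tropical Nash equilibria. If $x_i > -\infty$, complementarity gives $w_i = -\infty$, so the equation reduces to $(\nashA \tdot y)_i = 0$; combined with $(\nashA \tdot y)_k \leq 0$ for every $k$, the index $i$ attains the maximum of $(\nashA \tdot y)_k$. Shifting $y$ by a constant to obtain $y^*$ translates all $(\nashA \tdot y^*)_k$ by the same constant, so $i$ remains a maximizer for $(\nashA \tdot y^*)_k$, which is the first half of~\eqref{eq:trop_pre_compl}; the argument for $(\trans{\nashB} \tdot x^*)$ is symmetric.

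For the reverse direction and bijectivity I would construct an explicit inverse. Given $(x^*, y^*)$ satisfying~\eqref{eq:trop_pre_compl}, let $\mu \coloneqq \max_i (\nashA \tdot y^*)_i$ and $\nu \coloneqq \max_j (\trans{\nashB} \tdot x^*)_j$; these are finite because $x^*$ and $y^*$ each contain a $0$-entry and every column of $\nashA$ and $\trans{\nashB}$ has a finite entry. I would then set $z \coloneqq ((-\nu) \tdot x^*, (-\mu) \tdot y^*)$, together with $w_i \coloneqq -\infty$ when $x^*_i > -\infty$ and $w_i \coloneqq 0$ otherwise (and analogously for $w_{r+j}$), and check that $(w, z)$ solves~\eqref{TNECP} and that its projection normalizes back to $(x^*, y^*)$. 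Injectivity follows from the same argument: if $z = (\alpha \tdot x^*, \beta \tdot y^*)$ projects from a TNECP solution, then picking any $i$ with $x^*_i > -\infty$ and using $(\nashA \tdot y)_i = 0$ forces $\beta = -\mu$, and symmetrically $\alpha = -\nu$, so the scaling is uniquely pinned down by $(x^*, y^*)$. I expect the only mildly delicate step to be checking, on indices $i$ with $x^*_i = -\infty$ but $(\nashA \tdot y^*)_i = \mu$, that setting $w_i = 0$ remains consistent with both the equation and complementarity; this is immediate because $x_i = -\infty$ there, and the rest of the verification is a direct translation of the classical derivation.
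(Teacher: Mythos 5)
Your proof is correct and follows essentially the same route as the paper's: the same argument that $z\neq-\infty$ forces both $x$ and $y$ to have a finite entry, the same identification of the scaling factors as $-\max_k(\nashA\tdot y^*)_k$ and $-\max_l(\trans{\nashB}\tdot x^*)_l$ for injectivity and surjectivity, and the same explicit construction of $(w,z)$ in the reverse direction. The only difference is cosmetic: the paper packages the slack variables into auxiliary sets $\Pcal$, $\Qcal$ and an intermediate criterion, whereas you work directly with the unfolded system.
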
 

\begin{proof}
Given $\gamma \neq -\infty$, we denote $\gamma^{\tdot (-1)} \coloneqq -\gamma$. 
We introduce the sets:
\begin{align*}
\Qcal & \coloneqq \Big\{(x, u, \lambda) \in \T^r \times \T^s \times \T \colon \tsum_i x_i = 0 \; \text{and} \; u \tplus \trans{\nashB} \tdot x = \begin{psmallmatrix} \lambda \\[-1ex] \vdots \\ \lambda \end{psmallmatrix} \Big\} \, ,\\
\Pcal & \coloneqq \Big\{(y, v, \mu) \in \T^r \times \T^s \times \T \colon \tsum_j y_j = 0 \; \text{and} \; v \tplus \nashA \tdot y = \begin{psmallmatrix} \mu \\[-1ex] \vdots \\ \mu \end{psmallmatrix}\Big\} \, .
\end{align*}
We can check that $(x^*, y^*) \in \Delta^\trop_{r-1} \times \Delta^\trop_{s-1}$ satisfies~\eqref{eq:trop_pre_compl} if and only if there exist $(u, \lambda)$ and $(v, \mu)$ such that
\begin{equation}\label{eq:equiv_criterion}
(x^*,u,\lambda) \in \Qcal \qquad (y^*,v,\mu) \in \Pcal \qquad \trans{u} \tdot y^* = \trans{v} \tdot x^* = -\infty \, .
\end{equation}
Moreover, if~\eqref{eq:equiv_criterion} holds, then $\lambda = \max_l (\trans{\nashB} \tdot x^*)_l$. Indeed, the inequality $\lambda \geq \max_l (\trans{\nashB} \tdot x^*)_l$ is obvious, and the other inequality comes from the fact that $y^*_j > -\infty$ for some $j$, so that $u_j = -\infty$ and $\lambda = (\trans{\nashB} \tdot x^*)_j$. In a similar way, we can prove that \eqref{eq:equiv_criterion} implies $\mu = \max_k (\nashA \tdot y^*)_k$. Since no column of $\nashA$ and $\trans{\nashB}$ are equal to $-\infty$, we deduce that $\lambda > -\infty$ and $\mu > -\infty$ as soon as~\eqref{eq:equiv_criterion} is satisfied. 

We now consider a solution $w = (\bar{v}, \bar{u})$ and $z = (x,y)$ of~\eqref{TNECP}, and introduce $\alpha \coloneqq (\tsum_i x_i)^{\tdot (-1)}$ and $\beta \coloneqq (\tsum_j y_j)^{\tdot (-1)}$. This is well-defined, since if $x$ were the $-\infty$ vector, then we would have $\bar{u} = q^+$, which would imply that $y$ and thus $z$ are $-\infty$ vectors. Similarly, we can show $y \neq -\infty$. Then, it is immediate to check that $(x^*, u, \lambda) \coloneqq \alpha \tdot (x, \bar u, 0)$ and $(y^*, v, \mu) \coloneqq \beta \tdot (y, \bar v, 0)$ satisfy \eqref{eq:equiv_criterion}. We deduce that the normalization is well-defined, and maps the projection $z$ of the solutions of~\eqref{TNECP} to the pairs $(x^*, y^*) \in \Delta^\trop_{r-1} \times \Delta^\trop_{s-1}$ satisfying~\eqref{eq:trop_pre_compl}. Moreover, it is injective. Indeed, if $(x,y)$ is sent to $(x^*, y^*)$, then $\alpha = (\tsum_i x_i)^{\tdot (-1)}$ and $\beta = (\tsum_i y_j)^{\tdot (-1)}$ respectively satisfy $\alpha = \max_l (\trans{\nashB} \tdot x^*)_l$ and $\beta = \max_k (\nashA \tdot y^*)_k$ by the previous discussion. 

It remains to prove that the map is surjective. Take $(x^*, y^*)$ satisfying~\eqref{eq:equiv_criterion} for some $u, v, \lambda, \mu$. Then, $w = (\mu^{\tdot (-1)} \tdot v, \lambda^{\tdot (-1)} \tdot u)$ and $z = (\lambda^{\tdot (-1)} \tdot x^*, \mu^{\tdot (-1)} \tdot y^*)$ form a solution of~\eqref{TNECP}, and $z$ is sent by the normalization map to $(x^*, y^*)$.
\end{proof}

The Quint--Shubik conjecture in game theory~\cite{quint1997theorem}, disproved by von Stengel~\cite{von1999new}, claimed that every nondegenerate bimatrix game has at most $2^r-1$ equilibria, when the two players have the same number $r$ of actions. The following proposition states that the conjecture in true in the tropical setting:
\begin{proposition}\label{prop:quint-shubik}
let $P, Q \in \T^{r \times r}$ such that every column of $P$ and $\trans Q$ has at least one entry distinct from $-\infty$. Suppose that every column of $P$ and $\trans Q$ has a unique maximizing entry. Then, there are at most $2^r-1$ tropical Nash equilibria.
\end{proposition}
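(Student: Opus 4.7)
The plan is to reduce the count of tropical Nash equilibria to the count of solutions of a well-chosen~\eqref{TNECP} instance, and then invoke \Cref{thm:count}.

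First I would set $M^- \coloneqq \begin{psmallmatrix} -\infty & P \\ \trans Q & -\infty \end{psmallmatrix}$ and take $q^+$ as the all-$0$ vector of size $2r$, so as to match the setting of \Cref{prop:correspondence}. Its hypotheses are met under our assumptions, so normalization bijectively maps the $z$-projections of the solutions of this~\eqref{TNECP} instance onto the tropical Nash equilibria. In particular, the number of tropical Nash equilibria is bounded above by the number of solutions of~\eqref{TNECP}. The instance is moreover nondegenerate: for each column $j$ of $M^-$, its non-$(-\infty)$ entries coincide with those of a column of $P$ (if $j > r$) or of $\trans Q$ (if $j \leq r$), so the unique-maximizer hypothesis on $P$ and $\trans Q$ translates into uniqueness of $\arg\min_k(q^+_k - M^-_{kj}) = \arg\max_k M^-_{kj}$. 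By \Cref{thm:count}, the number of solutions of this instance equals exactly $2^\kappa - 1$, where $\kappa$ is the number of connected components of the associated bipartite graph $G$.

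The main step---and where I expect the actual content of the proof to lie---is to show $\kappa \leq r$. The argument exploits the block structure of $M^-$: a red edge $u_{i'} v_j$ exists if and only if $i' \in \arg\max_k M^-_{kj}$, and the $-\infty$ blocks of $M^-$ force $i' \in \{r+1, \dots, 2r\}$ when $j \in [r]$, and $i' \in [r]$ when $j \in \{r+1, \dots, 2r\}$. Hence every red edge of $G$ links a node indexed by $[r]$ to one indexed by $\{r+1, \dots, 2r\}$ (on the opposite side of the bipartition). Combined with the alternation of blue and red edges in cycles of $G$ (as in the proof of \Cref{thm:tnecp}), this forces any cycle of $G$ to traverse row nodes of $[r]$ and of $\{r+1, \dots, 2r\}$ alternately, and in particular to contain at least one row node of $[r]$. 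Since each connected component of $G$ contains at least one cycle (as exploited in the proof of \Cref{thm:count}), and the $r$ row nodes indexed by $[r]$ are distributed across these components, we conclude $\kappa \leq r$. Therefore the number of tropical Nash equilibria is at most $2^\kappa - 1 \leq 2^r - 1$, as claimed.
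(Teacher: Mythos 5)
Your proof is correct and follows essentially the same route as the paper: reduce to the \eqref{TNECP} instance of \Cref{prop:correspondence}, observe that the unique-maximizer hypothesis gives nondegeneracy, invoke \Cref{thm:count}, and bound the number $\kappa$ of connected components of $G$ by $r$ using the antidiagonal block structure of $M^-$. The only (harmless) variation is in the last step: you argue that the cycle in each component must contain a row node indexed by $[r]$, whereas the paper argues that each component contains at least four of the $4r$ nodes; both yield $\kappa \leq r$.
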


\begin{proof}
Since every column of $P$ and $\trans Q$ has a unique maximizing entry, the corresponding instance of~\eqref{TNECP} (in the sense of \Cref{prop:correspondence}) is nondegenerate. Let $G$ be the associated graph (as introduced in \Cref{sec:tnecp}). It consists of the row nodes $u_i$ and column nodes $v_i$ where $i \in [2r]$. Let $i \in [r]$. The row node~$u_i$ is connected to the column node $v_i$ by a blue edge. Recall that the latter is incident to at least one red edge. Due to the antidiagonal shape of the matrix $M^- = \begin{psmallmatrix}
-\infty & \nashA \\
\trans{\nashB} & -\infty
\end{psmallmatrix}$, any such red edge connects $v_i$ to some row node $u_j$ where $r+1 \leq j \leq 2r$. Finally, the latter node is connected by a blue edge to the column node $v_j$. As a consequence, every connected component of $G$ contains at least four distinct nodes. This ensures that the number of its connected components is at most $r$. We deduce the result from \Cref{thm:count}.
\end{proof}

\begin{remark}
\Cref{th:support} together with \Cref{prop:quint-shubik} show in turn that classical bimatrix games where the two players have the same number of actions and where the matrix $M$ of the corresponding \eqref{NECP} formulation verifies the ``dominance condition'' satisfy the Quint--Shubik conjecture.
\end{remark}

Observe that our definition of tropical Nash equilibria fits the case where the payoff matrices are ``tropically nonnegative''  (every element of $\T$ is greater than or equal to the tropical zero). For bimatrix games where the payoffs are nonnegative elements of a real-closed field with a nonarchimedean valuation (see~\cite{AllamigeonGaubertSkomraDCG20} for a discussion), the valuation of Nash equilibria are tropical Nash equilibria. Moreover, provided that the valuations of the payoffs are generic, every tropical Nash equilibrium arises in this way.

It is remarkable that the definition of tropical Nash equilibria is not equivalent to the tropical analogue of~\eqref{eq:nash} (studied in a more general framework by Briec and Yesilce~\cite{Briec2022}), which we write as:
\begin{equation}\label{eq:tropical_nash}
\begin{aligned}
\trans{x^*} \tdot \nashA \tdot y^* & \geq \trans{x} \tdot \nashA \tdot y^* \, , \\
\trans{x^*} \tdot \nashB \tdot y^* & \geq \trans{x^*} \tdot \nashB \tdot y \, ,
\end{aligned}
\end{equation}
for all $(x,y) \in \Delta^\trop_{r-1} \times \Delta^\trop_{s-1}$. Indeed, $(x^*, y^*) = (0,0)$ is always a solution of~\eqref{eq:tropical_nash}. In fact, \eqref{eq:trop_pre_compl} implies~\eqref{eq:tropical_nash} but the converse $\eqref{eq:tropical_nash} \implies \eqref{eq:trop_pre_compl}$ is not true anymore. Indeed, the corresponding classical implication $\eqref{eq:nash} \implies \eqref{eq:pre_compl}$ relies on the fact that in a linear program, if a strict convex combination of points is an optimal solution, then all these points are optimal. This property does not hold in the tropical setting.

Extensions of the tropical semiring with ``negative'' elements, such as the symmetrized tropical semiring~\cite{PlusCDC90}, have been investigated; see also~\cite{AGGuterman2014Cramer}, and~\cite{LohoVeghITCS20} for a recent development of tropical convexity with signs. However, the two definitions of classical Nash equilibria (\ie, in the sense of~\eqref{eq:nash} and~\eqref{eq:pre_compl}) cannot be reduced to the case with nonnegative payoffs in the tropical setting (\eg, the shift of the payoffs is irreversible, and leads to non-equivalent problems in which every payoff is ultimately replaced by a large constant). As a consequence, the tropicalization of Nash equilibria with signed payoffs deserves further investigations. We expect that these have a different status complexitywise. Indeed, similarly to the classical case~\cite{Adler2013}, tropical signed matrix should be expressible enough to capture tropical feasibility problems with signed entries. The latter are equivalent to the so-called mean payoff games~\cite{AkianGaubertGuterman2012}, and the existence of polynomial time algorithm to solve them is an open question.

\bibliographystyle{amsplain}
\providecommand{\bysame}{\leavevmode\hbox to3em{\hrulefill}\thinspace}
\providecommand{\MR}{\relax\ifhmode\unskip\space\fi MR }
\providecommand{\MRhref}[2]{%
  \href{http://www.ams.org/mathscinet-getitem?mr=#1}{#2}
}
\providecommand{\href}[2]{#2}

\end{document}